\documentclass{amsart}
\usepackage{amssymb}
\usepackage{relsize}
\usepackage{enumerate}
\allowdisplaybreaks

\usepackage{comment} 
\usepackage{soul}
\usepackage{float}
\let\oldtocsection=\tocsection 
\let\oldtocsubsection=\tocsubsection 
\renewcommand{\tocsection}[2]{\hspace{0mm}\oldtocsection{#1}{#2}}
\renewcommand{\tocsubsection}[2]{\hspace{2em}\oldtocsubsection{#1}{#2}}

\usepackage{hyperref}
\hypersetup{
    colorlinks,
    citecolor=black,
    filecolor=black,
    linkcolor=black,
    urlcolor=black
}

\usepackage{tikz-cd}
\tikzcdset{arrow style=math font}
\usetikzlibrary{external,arrows.meta}

\numberwithin{equation}{section}
\newtheorem{theorem}{Theorem}[section]
\newtheorem*{theorem*}{Theorem}
\newtheorem*{obrA}{Theorem A}
\newtheorem*{obrB}{Theorem B}
\newtheorem*{obrD}{Theorem D}

\newtheorem*{corollary*}{Corollary}
\newtheorem*{conjecture*}{Conjecture}
\newtheorem{lemma}[theorem]{Lemma}
\newtheorem{proposition}[theorem]{Proposition}
\theoremstyle{definition}
\newtheorem{question}{Question}

\newtheorem{definition}[theorem]{Definition}
\newtheorem{remark}[theorem]{Remark}

\DeclareMathOperator{\id}{id}
\DeclareMathOperator{\Aut}{Aut}
\DeclareMathOperator{\Out}{Out}
\DeclareMathOperator{\Inn}{Inn}
\DeclareMathOperator{\Hom}{Hom}

\input cyracc.def

\begin{document}

\title{A non-nilpotent localization of a finite $p$-group} 

\author[S. O. Ivanov]{Sergei O. Ivanov} 
\email{ivanov.s.o.1986@gmail.com, ivanov.s.o.1986@bimsa.cn}
\address{
Beijing Key Laboratory of Topological Statistics and Applications for Complex Systems, Beijing Institute of Mathematical Sciences and Applications (BIMSA), Beijing 101408, China.}

\author[G. Kadantsev]{Georgii Kadantsev}
\email{kadantsev.georg@yandex.ru}
\address{Qiuzhen College, Tsinghua University}

\begin{abstract}
We give negative answers to two questions of Farjoun about localizations of groups. For every odd prime $p$, we construct a finite $p$-group $G$, an infinite non-nilpotent group $H$, and a local embedding $f\colon G\hookrightarrow H$. The associated localization satisfies $L_f(G)\cong H$, showing that localizations need not preserve nilpotence and that the localization map $G\to L(G)$ need not be surjective when $G$ is a finite $p$-group.   
\end{abstract}

\maketitle

\tableofcontents

\section{Introduction}
A localization on the category of groups is an endofunctor
$L\colon \mathrm{Grp}\to\mathrm{Grp}$ equipped with a natural
transformation $\eta\colon \mathrm{Id}\to L$ such that
$
\eta_{LG},L\eta_G\colon LG\to L^2G$ are equal isomorphisms for every group $G$. For example: 
$n$-nilpotentization $G\mapsto G/\gamma_{n+1}(G)$,
Baumslag's rationalization $G\mapsto G\otimes\mathbb{Q}$,
Bousfield's $HR$-localization, and Levine's localization are localizations. 
For an overview of the subject and its history, see
\cite{Casacuberta,Libman,AIP}.
A central theme is determining which classes of groups are preserved
by localization functors.

Many classes of groups are known not to be preserved by a general localization:
\begin{itemize}
    \item finite groups \cite{LibmanFinite}; 
    \item torsion groups \cite{Libman};
    \item simple groups \cite{RodriguezSchererViruelSimple};
    \item perfect groups \cite{RodriguezSchererViruel,BadziochFeshbach};
    \item  solvable groups \cite{OSullivanFree,OSullivan}.
\end{itemize}
Every localization does, however, preserve:
 abelian groups
\cite{Casacuberta}; and  
nilpotent groups of class at most 2
\cite{Libman}.

The following two questions, posed by Farjoun in the 1990s, remained open and were regarded as among the most important problems in the theory \cite{Libman1997,Casacuberta,Farjoun2006}.
\begin{question}
Does every localization preserve the class of nilpotent groups?
\end{question}
\begin{question}
For every localization $L$ and every finite $p$-group $G$,
is the map $G\to LG$ surjective?
\end{question}

Since a finite group is nilpotent if and only if it is the product of its Sylow $p$-subgroups, and since any localization preserves finite products, the second question can be viewed as a refinement of the first for the case of finite groups. Both questions have affirmative answers for certain classes of
localizations: those that preserve reflexive coequalizers
\cite{AIP}; and $R$-localizations when $R$ is a binomial ring
\cite{KKI}. 

This paper is devoted to a negative answer to both of these questions in the general case.

Every group homomorphism $f:G\to H$ determines a localization $L_f$. A group $Y$ is called $f$-local, if  
$f^\ast\colon \Hom(H,Y)\to\Hom(G,Y)$
is a bijection. For any group $X$ there exists a universal homomorphism $X\to L_f X$ to an $f$-local group. This defines a localization functor \[L_f:\mathrm{Grp} \longrightarrow \mathrm{Grp}\] 
that is called $f$-localization. 

A homomorphism $f:G\to H$ is called local, if the group $H$ is $f$-local. It is known that in this case $L_fG\cong H.$ Notice that for any localization $L$ and any group $X$ the homomorphism $X\to LX$ is local. 

Questions 1 and 2 can therefore be reformulated as follows, for a local homomorphism $f\colon G\to H$. 
\par\medskip
\noindent\textbf{Question $1'$.}
If $G$ is nilpotent, must $H$ also be nilpotent?

\par\medskip
\noindent\textbf{Question $2'$.}
If $G$ is a finite $p$-group, must $f$ be surjective?
\par\medskip

There are several positive results concerning Question~$2'$: 
\begin{itemize}
    \item If $G$ is a finite $p$-group of nilpotency class at most $3$ and $H$ is finite, then $f$ is surjective.  \cite{Aschbacher}.
    \item If $G$ is a finite metabelian $p$-group and $H$ is finite,
    then $f$ is surjective.  \cite{Aschbacher}.
    \item If $G$ and $H$ are abelian,
    then the localization map $X\to L_fX$ is surjective for every
    finite $p$-group $X$ \cite[Theorem~B, p.~27]{BastardasThesis}.
\end{itemize}

Our main result is the following theorem. 

\begin{theorem*}
    For every odd prime $p$, there is a group $G$ of order $p^6$ and nilpotency class 3, with a local embedding $f \colon G \hookrightarrow H$ into an infinite non-nilpotent group. 
    Hence $L_f$ takes the nilpotent finite $p$-group $G$ to the infinite non-nilpotent group $H$.
\end{theorem*}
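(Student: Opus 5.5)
I would build $H$ as an amalgam (or HNN-type extension) containing the finite $p$-group $G$ so that two facts hold: $H$ is infinite and non-nilpotent, and restriction $\Hom(H,H)\to\Hom(G,H)$ is a bijection. Concretely, I would look for a group $G$ of order $p^6$ and class $3$ together with a subgroup $A\le G$ and an automorphism or embedding $\varphi\colon A\to G$ that is not induced by conjugation inside $G$, and set $H$ to be the HNN extension $G\ast_{\varphi}$, or more likely a quotient of it chosen so that every homomorphism $G\to H$ extends uniquely. Since $G$ is finite, $H$ contains the infinite cyclic stable letter $t$, so $H$ is infinite. Non-nilpotence follows from the Britton normal form: one can exhibit elements $x\in G$ and $t$ whose iterated commutators $[x,t,t,\dots,t]$ never become trivial, because they are reduced words of growing length.

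The locality argument splits into existence and uniqueness. For \emph{uniqueness}, I would arrange that $H$ is generated by $G$ together with elements forced by $G$. The cleanest way to get this is to make the extra generator lie in a normalizer or centralizer determined by $G$-data. For example, the extra generator could be uniquely determined as the unique element of $H$ satisfying certain relations with the images of $G$. I would prove a lemma that for any homomorphism $g\colon G\to H$, the relevant equations in $H$ have at most one solution. That lemma would come from centralizer computations in HNN extensions: centralizers of elliptic elements of $p$-power order lie in conjugates of $G$, or in a controlled cyclic-by-finite subgroup.

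For \emph{existence}, I would classify $\Hom(G,H)$. Every finite subgroup of an HNN extension of a finite group is conjugate into $G$, so each $g\colon G\to H$ equals conjugation by some $h$ composed with an endomorphism $G\to G$. The problem then reduces to two checks: every endomorphism of $G$ extends to an endomorphism of $H$, meaning it is compatible with $\varphi$, and inner conjugations obviously extend. This is where the choice of $G$ matters. I would pick $G$, of order $p^6$, class $3$, $p$ odd, presumably via a Lazard-correspondence Lie ring so that its endomorphisms are computable, with a very rigid endomorphism monoid: only automorphisms of a specific small form, plus maps into the center or trivial-ish maps. Each such endomorphism must then commute with $\varphi$ up to conjugation, with the conjugating element unique.

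The main obstacle is this existence step: showing that every endomorphism of $G$, including non-injective ones, extends uniquely to $H$. Non-injective endomorphisms have images that are smaller $p$-groups, and one must send $t$ to an element realizing $\varphi$ on the image. I would first try to force all non-injective endomorphisms to factor through an abelian quotient on which $\varphi$ acts trivially, so that $t\mapsto 1$ works. For automorphisms, I would compute $\Aut(G)$ explicitly and verify that each automorphism normalizes $\varphi$ modulo inner automorphisms. Uniqueness in those cases would follow from the centralizer of $A$ in $H$ being trivial or contained in $G$.
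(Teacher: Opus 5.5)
\textbf{The approach fails at uniqueness.} With $H$ an honest HNN extension $G\ast_{\varphi}$, uniqueness of extensions fails for every choice of $G$, $A$ and $\varphi$, so $G\hookrightarrow H$ is never local.

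First, killing $G$ gives an epimorphism $H\twoheadrightarrow\mathbb{Z}$ with $t\mapsto 1$. Composing it with $1\mapsto h$ yields, for each $h\in H$, a different endomorphism of $H$ restricting to the trivial homomorphism $G\to H$. In general, locality forces every endomorphism of $H$ that kills $G$ to be trivial, and a free stable letter is exactly what prevents this.

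Second, even $\id_G$ has several extensions. For $z\in Z(G)$, the assignment $g\mapsto g$, $t\mapsto tz$ respects $t^{-1}at=\varphi(a)$, because $z$ centralizes $\varphi(A)\subseteq G$. Since $tz\neq t$ for $z\neq 1$, these are distinct extensions. So the centralizer condition you rely on can never hold: the relevant centralizer ($C_H(A)$ or $C_H(\varphi(A))$) always contains $Z(G)\neq 1$. The alternative ``contained in $G$'' would not suffice anyway. Likewise, sending $t\mapsto 1$ for a non-injective endomorphism $\alpha$ gives existence, but $t$ can equally go to any element of $C_H(\alpha(A))\supseteq Z(G)$.

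\textbf{Passing to a quotient removes your tools.} To eliminate this freedom you would add relations that put $t$ into the normal closure of $G$ and rigidify these centralizers. But then your arguments for infinitude (the stable letter has infinite order), for non-nilpotence (Britton's lemma), and for finite subgroups being conjugate into $G$ are no longer available. The proposal gives no way to control such a quotient, and that control is the missing idea.

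\textbf{What the paper does instead.} It takes $H=(K\rtimes G)/\Delta$, where:
\begin{itemize}
\item $S$ is a torsion-free Tarski-type monster with $\Aut(S)=S\rtimes C_p$, whose outer automorphism acts fixed-point-freely (built from Obraztsov's embedding theorems);
\item $K$ is a perfect central extension of $S$ with $Z(K)\cong C_{p^2}$, obtained from an analysis of the relation module;
\item $G$ acts on $K$ through $\varepsilon$, and $\Delta$ glues $Z(K)$ to $G'$.
\end{itemize}
Then $[H,H]=K$ is perfect, so $H$ is not nilpotent, and finite subgroups of $H$ are conjugate into $G$. Every nontrivial endomorphism of $K$ has the form $\beta^e\circ c_k$. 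An endomorphism extending a map that kills $G'$ kills $Z(K)$ and hence all of $K$. For automorphisms, the fixed-point-freeness of $\beta$ forces uniqueness. The one ingredient of your plan that survives is the rigidity of the endomorphisms of $G$ (each is an automorphism or kills $G'$), which is exactly Proposition~\ref{prop:p-group-endomorphisms}.
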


The group $G$ is an explicit metacyclic quotient of $\mathbb{Z}/{p^3} \rtimes \mathbb{Z}/{p^4}$ with $G' \cong \mathbb{Z}/p^2$ (Section~\ref{section:finite-p-group}). 
Its endomorphisms satisfy a useful dichotomy: each either kills $G'$ or is an automorphism.
The group $H$ is assembled in three steps.
\begin{enumerate}
    \item We construct a torsion-free ``Tarski-type'' monster group $S$, simple and with all proper subgroups cyclic, whose automorphism group is $S \rtimes \mathbb{Z}/p$ with $\mathbb{Z}/p$ acting regularly;
    it comes from Obraztsov's embedding theorems \cite{Obraztsov}, which rest on Ol'shanskii's graded aspherical presentations \cite{Olshanskii} (Section~\ref{section:tarski-monster-group}).
    \item 
    We build a perfect central extension $K$ of $S$ with center $\mathbb{Z} / p^2$,
    using the description of the relation module of a graded aspherical presentation over a free product \cite[\S\S 31--34]{Olshanskii}, \cite{AshmanovOlshanskii} (Section~\ref{section:the-central-extension}).
    \item The group $H$ is the quotient $(K \rtimes G)/\Delta$ that identifies the center of $K$ with $G'$ (Section~\ref{section:target}).
\end{enumerate}  

These methods are not entirely new. 
The construction follows the pattern of \cite{GobelRodriguezShelah,GobelShelah}: a rigid simple target produced by combinatorial group theory, here Obraztsov's embedding theorems. 
To it we add the passage to central extensions used in \cite{RodriguezSchererViruelSimple} and a gluing along a subgroup as in \cite{BadziochFeshbach}. 
The new ingredient, forced by the nilpotency of $G$, is an equivariant central extension whose center is identified with $G'$.

The appendix recalls the background on graded presentations and van Kampen diagrams, 
states the parts of Obraztsov's theorems that we use, and proves Lemma~\ref{lemma:counting-relators},
an analogue over free products of Ol'shanskii's description of the relation module of a graded aspherical presentation.

Some questions remain open. 
\begin{question}
    
Let $f\colon G \to H$ be a local homomorphism between finite groups. 
If $G$ is nilpotent, must $H$ be nilpotent?
\end{question} 
\begin{question}
If $G$ is a finite $2$-group, is it true that $LG$ is nilpotent for every localization $L$?
\end{question}
Our construction does not extend to $2$-groups: $G$ must act non-trivially on the monster $S$, so the image of a $2$-group $G$ in $\Aut(S)$ would contain an involution, whereas in Obraztsov's theorems the factors have no involutions, and every element of finite order of $\Aut(S)$ is conjugate into a factor (Lemma~\ref{lemma:conjugate-into}).

\subsection*{Conventions} 
We write $x^y = y^{-1}xy$, $[x,y] = x^{-1}y^{-1}xy = x^{-1}x^y$ and $c_y(x) = x^y$.
The lower central series is $\gamma_1(G) = G$, $\gamma_{i+1}(G) = [\gamma_i(G), G]$, so $\gamma_2(G) = G' = [G,G]$. 
We write $C_n$ for the cyclic group of order $n$ in multiplicative notation, and $\mathbb{Z}/n$ for the integers modulo $n$ as an additive group or a ring; in the group ring $\mathbb{Z}[C_p]$ the generator of $C_p$ is denoted $t$.

\section{The finite \texorpdfstring{$p$}{p}-group \texorpdfstring{$G$}{G}}\label{section:finite-p-group}

Fix an odd prime $p$ and consider the group
\[ F = C_{p^3} \rtimes C_{p^4} = \langle a \rangle \rtimes \langle b \rangle,\]
with the action given by $a^b = a^{1+p}$.
This is well defined since $a^{b^k} = a^{(1+p)^k}$ and $1+p$ has order $p^2$ in $(\mathbb{Z}/p^3)^\times$.
The element $x = b^{p^3}a^{-p^2}$ is central of order $p$.
Indeed, $a^{p^2}$ is central, since $(a^{p^2})^b = a^{p^2(1+p)} = a^{p^2}$, and $b^{p^3}$ is central because already $a^{b^{p^2}} = a^{(1+p)^{p^2}} = a$;
hence $x^p = b^{p^4}a^{-p^3} = 1$.

The subgroup $\langle x\rangle$ meets $\langle a\rangle$ and $\langle b\rangle$ trivially,
so in the quotient by $\langle x \rangle$ the images of $a$ and $b$ retain their order.
Set $G := F/\langle x\rangle$, a group of order $p^6$ with presentation
\begin{equation}\label{eq:fin-rep}
    \langle a, b \mid a^{p^3} = b^{p^4} = 1, a^{p^2} = b^{p^3}, a^b = a^{1+p} \rangle.
\end{equation}
Every element of $G$ can be written uniquely as $a^ib^j$ with $0 \le i,j < p^3$.

We have $[a,b] = a^{-1}a^b = a^p =: q$ and
\begin{align}\label{eq:action-on-comm}
    q^b = (a^b)^p = q^{1+p},
\end{align}
so $\langle q\rangle$ is normal and contained in $[G,G]$.
Since $[a,b] = q$, the quotient $G/\langle q\rangle$ is abelian, so $[G,G] = \langle q\rangle \cong C_{p^2}$.
The center can be read off from the normal form:
\begin{align}
    a^{a^i b^j} = a^{b^j} = a^{(1+p)^j}, \\
    (a^i b^j)^{b} = a^{(1+p)i} b^j,
\end{align}
so $a$ commutes with $a^i b^j$ if and only if $(1+p)^j \equiv 1 \pmod{p^3} \iff p^2 \mid j$.
Similarly, $b$ commutes with $a^i b^j$ if and only if $(1+p)i \equiv i \pmod{p^3} \iff p^2 \mid i$.
Therefore, $Z(G) = \langle a^{p^2}, b^{p^2} \rangle = \langle b^{p^2} \rangle$, of order $p^2$, since $b^{p^3} = a^{p^2}$.
Finally, $[q,b] = a^{-p}(a^b)^p = a^{p^2}$, so $\gamma_3(G) = \langle a^{p^2}\rangle$ is central and $\gamma_4(G) = 1$.

\begin{lemma}
    For the group $G$ given by the presentation \eqref{eq:fin-rep} the following assertions hold:
    \begin{enumerate}\itemsep=0mm
        \item it is a finite group of order $p^6$;
        \item its derived subgroup $G'$ is cyclic of order $p^2$;
        \item its center is cyclic of order $p^2$;
        \item it is nilpotent of class 3.
    \end{enumerate}
    Furthermore, the centralizer $D := C(G') = \langle a, b^p \rangle$ has index $p$.
    This normal subgroup is the kernel of the map $\varepsilon: G \to \mathbb{Z}/p,\ a \mapsto 0, b \mapsto 1$.
\end{lemma}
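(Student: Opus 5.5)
Most of this lemma has already been verified in the discussion preceding it, so the plan is to collect those computations and then handle the centralizer claim. For (1), use the construction $G = F/\langle x\rangle$ with $F = C_{p^3}\rtimes C_{p^4}$ of order $p^7$ and $x$ central of order $p$. Then $|G| = p^6$, and the normal form $a^ib^j$ with $0\le i,j<p^3$ gives exactly $p^6$ elements, which matches. For (2), note that $[a,b]=q=a^p$ and that $\langle q\rangle$ is normal with abelian quotient; this gives $G'=\langle q\rangle$. Its order is $p^2$ because $a$ has order $p^3$ in $G$. For (3), the normal-form computation already gives $Z(G)=\langle b^{p^2}\rangle$. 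This group is cyclic, and $b^{p^2}$ has order $p^2$ because $b$ has order $p^4$ and $b^{p^3}=a^{p^2}\neq 1$. For (4), we have $\gamma_3(G)=\langle [q,b],[q,a]\rangle=\langle a^{p^2}\rangle\neq 1$, since $[q,a]=1$ and $\gamma_3$ is generated as a normal subgroup by commutators of generators of $G'$ with generators of $G$. This subgroup is central, so $\gamma_4=1$ and the class is exactly $3$.

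For the centralizer, write $D=C(G')=C(q)$. An element $g=a^ib^j$ acts on $q=a^p$ by $q^{g}=q^{(1+p)^j}$, since $a$ centralizes $q$. The element $q$ has order $p^2$, so $g$ centralizes $q$ if and only if $(1+p)^j\equiv 1 \pmod{p^2}$. Because $1+p$ has order $p$ modulo $p^2$, this happens if and only if $p\mid j$. Hence $D=\{a^ib^j : p\mid j\}=\langle a,b^p\rangle$, which has $p^3\cdot p^2=p^5$ elements and therefore index $p$.

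Finally, check that $\varepsilon$ is a well-defined homomorphism by testing it on the relations of \eqref{eq:fin-rep}. The relation $a^{p^3}=1$ maps to $0$. The relation $b^{p^4}=1$ maps to $p^4\equiv 0$. The relation $a^{p^2}=b^{p^3}$ becomes $0=p^3\equiv 0$. The relation $a^b=a^{1+p}$ becomes $0=0$, since $\mathbb{Z}/p$ is abelian. Then $\varepsilon(a^ib^j)=j \bmod p$, so $\ker\varepsilon=\{a^ib^j : p\mid j\}=D$, and $D$ is normal as a kernel.

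The only delicate point is the uniqueness of the normal form. It rests on $\langle x\rangle\cap\langle a\rangle\langle b\rangle$-type intersections in $F$, which were only sketched earlier. I would recheck it by noting that $b^{p^3}$ is eliminated in favour of $a^{p^2}$, so exponents of $b$ reduce modulo $p^3$.
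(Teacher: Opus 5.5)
Your proposal is correct and follows the paper's proof. Items (1)--(4) are read off from the computations preceding the lemma, and the centralizer claim comes from $q^{a^ib^j}=q^{(1+p)^j}$ together with $1+p$ having order $p$ modulo $p^2$. Your extra checks (well-definedness of $\varepsilon$, normal generation of $\gamma_3(G)$) only make explicit what the paper leaves implicit. The uniqueness of the normal form that you flag is not delicate: existence of the form $a^ib^j$ with $0\le i,j<p^3$, combined with $|G|=p^6$, gives uniqueness by counting.
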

\begin{proof}
    Only the last assertion remains.
    Clearly $a \in D$, while $b$ acts on $G'$ by multiplication by $1+p$ modulo $p^2$.
    Hence $a^ib^j \in D$ if and only if $p \mid j$, so $D = \langle a, b^p\rangle$ has index $p$.
\end{proof}

Recall the Hall--Petresco formula \cite{Hall} for class $3$ groups:
\[ x^n y^n = (xy)^n \tau_2^{\binom{n}{2}} \tau_3^{\binom{n}{3}}, \]
where $\tau_2 \in \gamma_2(G), \tau_3 \in \gamma_3(G)$.

If $\exp(\gamma_i(G))$ divides $\binom{n}{i}$ for $i = 2,3$, the correction terms vanish and $x^ny^n = (xy)^n$.
Here $\exp(\gamma_2(G)) = p^2$ and $\exp(\gamma_3(G)) = p$, so $n = p^3$ works and $\xi(x) = x^{p^3}$ is an endomorphism of $G$.
Indeed, for every odd prime $p$, we have $p^2 \mid \binom{p^3}{2}$ and $p \mid \binom{p^3}{3}$.
It is clear that $\xi$ maps into $Z(G)$ and has kernel exactly $C(G')$.
\begin{remark}
    This also works for $n = p^2$, since $p^2 \mid \binom{p^2}{2}$ and $p \mid \binom{p^2}{3}$:
    the map $x \mapsto x^{p^2}$ is an endomorphism of $G$ onto $Z(G)$.
\end{remark}
The same argument applies to $D = \langle a, b^p\rangle$.
Here $[a,b^p] = a^{-1}a^{(1+p)^p} = a^{p^2}$, since $(1+p)^p \equiv 1+p^2 \pmod{p^3}$ for odd $p$.
This commutator is central, so $D$ has class $2$ and $D' = \langle a^{p^2}\rangle \cong C_p$.
For $n = p$ only the term $\tau_2^{\binom{p}{2}}$ remains, and it is trivial because $p \mid \binom{p}{2}$, so $x \mapsto x^p$ is an endomorphism of $D$.
We obtain:
\begin{lemma}
    In $G$ the following equations hold:
    \begin{equation}\label{eq:3-5}
        (a^i b^j)^{p^3} = a^{p^2j}
    \end{equation}
    \begin{equation}\label{eq:3-6}
        (a^i b^{pk})^p = a^{pi} b^{p^2k}
    \end{equation}
\end{lemma}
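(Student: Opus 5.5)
The plan is to use the two power endomorphisms established just before the statement: $\xi(x)=x^{p^3}$ on $G$, and $x\mapsto x^p$ on the class-$2$ subgroup $D=\langle a,b^p\rangle$. Because each is a homomorphism, it suffices to evaluate it on the generators and then multiply.

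For \eqref{eq:3-5}, apply $\xi$ to $a^ib^j$:
\[
(a^ib^j)^{p^3}=\xi(a)^i\,\xi(b)^j=(a^{p^3})^i\,(b^{p^3})^j .
\]
The relations \eqref{eq:fin-rep} give $a^{p^3}=1$ and $b^{p^3}=a^{p^2}$. Hence $(a^ib^j)^{p^3}=a^{p^2j}$. The only input is that $\xi$ is an endomorphism, which the Hall--Petresco argument has already provided, using $p^2\mid\binom{p^3}{2}$ and $p\mid\binom{p^3}{3}$ for odd $p$.

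For \eqref{eq:3-6}, note that $a^ib^{pk}\in D$. The map $x\mapsto x^p$ is an endomorphism of $D$, since $D$ has class $2$, $D'$ has exponent $p$, and $p\mid\binom p2$. Therefore
\[
(a^ib^{pk})^p=(a^p)^i\,\bigl((b^p)^p\bigr)^k=a^{pi}b^{p^2k}.
\]

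The only point needing care is that the endomorphism property on $D$ depends on the computation $[a,b^p]=a^{p^2}$ being central. That computation uses $(1+p)^p\equiv 1+p^2 \pmod{p^3}$, which holds because $p$ is odd. Both facts are already recorded in the text preceding the statement, so I expect no real obstacle.
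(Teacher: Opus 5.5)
Your proposal is correct and follows the paper's own route. There the lemma is read off from the Hall--Petresco facts that $x\mapsto x^{p^3}$ is an endomorphism of $G$ and $x\mapsto x^p$ is an endomorphism of the class-$2$ subgroup $D=\langle a,b^p\rangle$, evaluated on generators using $a^{p^3}=1$ and $b^{p^3}=a^{p^2}$, exactly as you do.
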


We also need the Frattini subgroup.
\begin{lemma}
    The Frattini subgroup is $\Phi(G) = \langle a^p, b^p \rangle = \{ a^{pi} b^{pj}\}$, a group of order $p^4$.
\end{lemma}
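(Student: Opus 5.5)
The plan is to use the standard description of the Frattini subgroup of a finite $p$-group: $\Phi(G) = G^p G'$. This is the smallest normal subgroup whose quotient is elementary abelian. We then compute $G^pG'$ explicitly in the normal form $a^ib^j$, and finally count its elements.

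First, $G' = \langle q\rangle = \langle a^p\rangle$ by the preceding computation, so $G' \subseteq \langle a^p, b^p\rangle$. Next, the subgroup $N := \langle a^p, b^p\rangle$ is normal. Indeed, $(a^p)^b = a^{p(1+p)} \in N$, and $(b^p)^a = b^p[b^p,a]$ with $[b^p,a] = a^{-p^2}$ central by the computation $[a,b^p]=a^{p^2}$ done for $D$. In $G/N$ the images of $a$ and $b$ commute (since $[a,b]=a^p\in N$) and have order dividing $p$. Hence $G/N$ is elementary abelian, generated by two elements, and $\Phi(G)\subseteq N$.

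Conversely, $a^p, b^p \in G^p \subseteq \Phi(G)$, so $N \subseteq \Phi(G)$ and therefore $\Phi(G) = N$. Alternatively, since $G$ is $2$-generated and non-cyclic (as $G/G'$ has the two independent generators $a,b$ modulo $N$), Burnside's basis theorem gives $|G:\Phi(G)| = p^2$ directly. Either way, $|\Phi(G)| = p^6/p^2 = p^4$.

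It remains to check the description $N = \{a^{pi}b^{pj}\}$. The set $\{a^{pi}b^{pj}\}$ is contained in $N$. It is closed under multiplication because conjugation by $b^{pj}$ sends $a^{pi}$ to a power of $a^p$, so that $b^{pj}a^{pi} = a^{pi(1+p)^{pj}}b^{pj}$. Uniqueness of the normal form $a^ib^j$ with $0\le i,j<p^3$ shows that this set has exactly $p^2\cdot p^2 = p^4$ elements, namely $i,j$ ranging modulo $p^2$ with $0\le pi,pj<p^3$. This agrees with the index count. The only mild subtlety is verifying that $G/N$ has order exactly $p^2$ rather than $p$, i.e. that $b\notin N$ and $a\notin N\langle b\rangle$. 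This follows from the normal form count, or from the surjection $\varepsilon$ together with the map $a\mapsto 1$, $b\mapsto 0$ to $\mathbb{Z}/p$. That map is well defined on the presentation \eqref{eq:fin-rep}: the relation $a^{p^2}=b^{p^3}$ maps to $0=0$, and $a^b=a^{1+p}$ maps to $1=1+p \equiv 1$.
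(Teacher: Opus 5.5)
Your proposal is correct and follows essentially the paper's route: use $\Phi(G)=G^pG'$ and identify $\langle a^p,b^p\rangle$ as the kernel of the surjection $G\to\mathbb{F}_p^2$, $a^ib^j\mapsto(i \bmod p,\ j \bmod p)$. Your pair of maps $a\mapsto 1,\ b\mapsto 0$ and $\varepsilon$ is exactly this map. The normality, elementary-abelian-quotient and order-counting checks you add are details the paper leaves implicit.
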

\begin{proof}
    For any finite $p$-group $X$ the Frattini subgroup is $\Phi(X) = X^p [X,X]$ (\cite[23.2, p. 105]{AschbacherFGT}).
    Equivalently, $N = \langle a^p, b^p\rangle$ is the kernel of $\pi \colon G \to \mathbb{F}_p^2,\ a^i b^j \mapsto (i \bmod p, j \bmod p)$.
\end{proof}

We can now describe the endomorphisms of $G$.
\begin{proposition}\label{prop:p-group-endomorphisms}
    For every endomorphism $\alpha : G \to G$ one of the following holds:
    \begin{enumerate}
        \item $\alpha$ is an automorphism, $\xi \circ \alpha = \xi$ and $\alpha(q) = q^{(1+p)^d}$ for a unique $d \in \{ 0, 1, \ldots, p-1\}$;
        \item $\alpha(G') = 1$.
    \end{enumerate}
\end{proposition}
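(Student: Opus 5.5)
The plan is to parametrize an endomorphism by $\alpha(a)=a^ib^j$ and $\alpha(b)=a^kb^l$ and read off congruences from the defining relations \eqref{eq:fin-rep}, using $\xi$, \eqref{eq:3-5} and \eqref{eq:3-6}. First, since $a^{p^3}=1$, we get $\xi(\alpha(a))=\alpha(a)^{p^3}=1$. So $\alpha(a)\in\ker\xi=D$, that is, $j=pj'$. Then \eqref{eq:3-6} gives $\alpha(a)^p=a^{pi}b^{p^2j'}$ and $\alpha(a)^{p^2}=a^{p^2(i+j')}$. By \eqref{eq:3-5}, $\alpha(b)^{p^3}=a^{p^2l}$. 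The relation $a^{p^2}=b^{p^3}$ therefore yields
\[ l\equiv i+j' \pmod p. \]
We also use $\alpha(q)=[\alpha(a),\alpha(b)]=\alpha(a)^p$, which comes from $[a,b]=a^p$.

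Next, split according to whether $\alpha$ is surjective modulo $\Phi(G)$. If it is, then $\alpha$ is an automorphism by Burnside's basis theorem; otherwise the image of $\alpha$ modulo $\Phi(G)$ is a proper subspace. Modulo $\Phi(G)$ we have $\alpha(a)\equiv(i,0)$ and $\alpha(b)\equiv(k,l)$, so non-surjectivity means $il\equiv0\pmod p$.

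\emph{Subcase $p\nmid i$.} Then $l\equiv0$, so $\alpha(b)\in D$ and the whole image lies in $D$. Since $D$ has class $2$ with $D'=\langle a^{p^2}\rangle$, we get $\alpha(a)^p=[\alpha(a),\alpha(b)]\in\langle a^{p^2}\rangle$. But $\alpha(a)^p=a^{pi}b^{p^2j'}$ lies in $\langle a^{p^2}\rangle$ only if $p\mid i$, which is a contradiction. So this subcase does not occur.

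\emph{Subcase $p\mid i$.} Then $\alpha(a)\in\Phi(G)$. I would check that $[\Phi(G),G]\subseteq\gamma_3(G)=\langle a^{p^2}\rangle$, using $[a^p,b]=a^{p^2}$ and $[a,b^p]=a^{p^2}$. Hence $a^{pi}b^{p^2j'}=\alpha(a)^p\in\langle a^{p^2}\rangle$, which forces $p\mid j'$. Then $\alpha(a)^{p^2}=1$, so $l\equiv i+j'\equiv0$ and $\alpha(b)=a^kb^{pm}\in D$. Now $b^{p^2}$ is central, so
\[ \alpha(q)=[a^{pi'}b^{p^2j''},a^kb^{pm}]=[a^{pi'},b^{pm}]=a^{pi'((1+p)^{pm}-1)}=1, \]
because $(1+p)^{pm}\equiv1\pmod{p^2}$. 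This gives alternative (2).

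In the automorphism case, $\alpha$ preserves the characteristic subgroups $G'$ and $D=C(G')$, and $\alpha(q)$ generates $G'$. Applying $\alpha$ to $q^b=q^{1+p}$ shows that $\alpha(b)$ acts on $G'$ by $1+p$. On the other hand, $\alpha(b)=a^kb^l$ acts by $(1+p)^l$. Hence $l\equiv1\pmod p$. It follows that
\[ \xi(\alpha(a^ub^v))=\alpha(a)^{p^3u}\,\alpha(b)^{p^3v}=a^{p^2lv}=a^{p^2v}=\xi(a^ub^v), \]
using that $\xi$ is a homomorphism, so $\xi\circ\alpha=\xi$. Finally write $\alpha(q)=q^u$ with $u$ a unit mod $p^2$. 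Then $q^{up}=\alpha(q^p)=\alpha(a^{p^2})=a^{p^2l}=a^{p^2}$, so $u\equiv1\pmod p$. Since $1+p$ generates the subgroup $1+p\mathbb{Z}/p^2$ of order $p$, we get $u=(1+p)^d$ for a unique $d\in\{0,\dots,p-1\}$.

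The main obstacle is the non-surjective subcase $p\mid i$, where the relations give little direct information. The key step there is the containment $[\Phi(G),G]\subseteq\gamma_3(G)$, which forces $p\mid j'$ and then $p\mid l$. I would verify this containment carefully first.
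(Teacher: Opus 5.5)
Your proof is correct, but it is organized differently from the paper's.

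The paper does not split on surjectivity first. It uses $\alpha(G')\subseteq G'$ right away: $\alpha(a)^p=\alpha(q)\in\langle a^p\rangle$ forces $p\mid j'$ (in your notation), so $\alpha(a)=a^i b^{p^2 s}$. The relation $a^{p^2}=b^{p^3}$ then gives $l\equiv i \pmod p$. The relation $[a,b]=a^p$, written out as the congruence $i((1+p)^l-1)\equiv pi+p^2s \pmod{p^3}$, yields $i(i-1)\equiv 0\pmod p$. The residue of $i$ then decides everything:
\begin{itemize}
\item if $i\equiv 1$, $\alpha$ is an automorphism via the Frattini quotient, and a direct computation gives $\xi\circ\alpha=\xi$ and $\alpha(q)=q^{i+ps}$;
\item if $i\equiv 0$, the same congruence gives $\alpha(q)=1$.
\end{itemize}

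You instead split on whether $\alpha$ is onto modulo $\Phi(G)$, and you replace the mod-$p^3$ computation by structural facts. In the degenerate case you use the class-$2$ subgroup $D$ and the inclusion $[\Phi(G),G]\subseteq\gamma_3(G)$. For automorphisms you use that $\alpha(b)$ must act on the characteristic subgroup $G'$ exactly as $b$ does. This gives $l\equiv 1\pmod p$, and from it both $\xi\circ\alpha=\xi$ and $u\equiv 1\pmod p$, without knowing $\alpha(a)$ explicitly. Your route explains more conceptually why every automorphism preserves $\xi$ (equivalently $\varepsilon$). The paper's route is a single self-contained computation.

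The step you flag as the main obstacle can in fact be bypassed. The observation $\alpha(q)\in G'$ already forces $p\mid j'$ for every endomorphism, so $l\equiv i\pmod p$ from the start. Then the subcase $p\nmid i$ is impossible at once (it would make $il\not\equiv 0$), and the inclusion $[\Phi(G),G]\subseteq\gamma_3(G)$ is not needed.
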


\begin{proof}
    Write $\alpha(a) = a^i b^j$ and $\alpha(b) = a^lb^m$.
    Since $\alpha(a)^{p^3} = 1$, \eqref{eq:3-5} gives $p \mid j$, say $j = pk$.
    Now $\alpha(a)^p = \alpha(q) \in G' = \langle q\rangle$, while $\alpha(a)^p = a^{pi}b^{p^2k}$ by \eqref{eq:3-6};
    by uniqueness of the normal form $p \mid k$, so $\alpha(a) = a^ic^s$ with $c = b^{p^2}$.

    Applying $\alpha$ to $a^{p^2} = b^{p^3}$ gives $a^{p^2i} = \alpha(a)^{p^2} = \alpha(b)^{p^3} = b^{p^3m} = a^{p^2m}$,
    so $i \equiv m \pmod p$.

    On the one hand, applying $\alpha$ to $[a,b] = a^p$ gives
    \begin{align}
        [a^i c^s, a^l b^m] &= [a^i, a^l b^m] = a^{-i} (a^l b^m)^{-1} a^i a^l b^m \notag\\
        &= a^{-i} b^{-m} a^i b^m = a^{i((1+p)^m - 1)}.
    \end{align}
    On the other hand,
    \begin{align}\label{eq:a-cubed}
        \alpha(a)^p = a^{pi}c^{ps} = a^{pi + p^2s}
    \end{align}
    Hence,
    \begin{align}\label{eq:second-congruence}
    i((1+p)^m-1) \equiv pi + p^2s \pmod {p^3}.
    \end{align}
    Dividing by $p$ and reducing modulo $p$, using $\frac{(1+p)^m-1}{p} \equiv m \pmod p$, we get $i(m-1) \equiv 0 \pmod p$.
    As $m \equiv i \pmod p$, this gives $i(i-1) \equiv 0 \pmod p$, so $i \equiv 0$ or $i \equiv 1 \pmod p$.

    \noindent \textit{Case $i \equiv 1 \pmod{p}$}.
    Then $m \equiv 1 \pmod{p}$.
    Consider the induced map $\bar{\alpha} : \mathbb{F}_p^2 \to \mathbb{F}_p^2$:
    \begin{align*}
        \alpha(a) = a^i c^s & \mapsto (i, 0) = i \bar{a};\\
        \alpha(b) = a^l b^m & \mapsto l \bar{a} + m\bar{b}.
    \end{align*}
    Writing the matrix of $\bar{\alpha}$ in the basis $\bar{a}, \bar{b}$, we get
    \[ M = \begin{pmatrix}
        i & l \\
        0 & m \\
    \end{pmatrix} \quad \det M = im. \]
    We have $i \equiv m \equiv 1 \pmod{p}$,
    so $\det M \ne 0$ and $\bar\alpha$ is bijective, i.e. $\alpha(G)\Phi(G) = G$.
    Since the elements of $\Phi(G)$ are non-generators, $\alpha(G) = G$, and $\alpha$ is bijective because $G$ is finite.
    By \eqref{eq:a-cubed}, $\alpha(q) = q^{i+ps}$ with $i + ps \equiv 1 \pmod p$;
    modulo $p^2$ the elements congruent to $1$ modulo $p$ are exactly the powers $(1+p)^d$, $0 \le d < p$, since $(1+p)^d \equiv 1+pd \pmod{p^2}$.
    Thus $\alpha(q) = q^{(1+p)^d}$ for a unique $d \in \{ 0, 1, \ldots, p-1 \}$.

    Finally, for all $u, v \in \mathbb{Z}$
    \begin{align*}
        \xi(\alpha(a^v b^u)) &= \alpha(a^v b^u)^{p^3} = (\alpha(a)^v \alpha(b)^u)^{p^3} \\
        &= (a^{v i} c^{v s} (a^l b^m)^u)^{p^3} = (b^{p^3})^{mu} = a^{p^2u m}.
    \end{align*}
    Using that $m \equiv 1 \pmod{p}$, we have $p^2m u \equiv p^2u \pmod{p^3}$.
    Hence $\xi\circ\alpha = \xi$.

    \noindent \textit{Case $i \equiv 0 \pmod{p}$}.
    Then $m \equiv 0 \pmod p$, so $(1+p)^m \equiv 1 \pmod{p^2}$.
    Together with $p \mid i$, we get from the congruence \eqref{eq:second-congruence} that $pi + p^2s \equiv 0 \pmod{p^3}$.
    Thus $\alpha(q) = a^{pi + p^2s} = 1$, and $\alpha$ kills $G'$.
\end{proof}

\begin{remark}\label{remark:xi-notation}
    Since $\xi(g) = a^{p^2\varepsilon(g)}$, the identity $\xi\circ\alpha = \xi$ gives
    $\varepsilon\circ\alpha = \varepsilon$ for every automorphism $\alpha$ of $G$.
\end{remark}

\section{The monster group \texorpdfstring{$S$}{S}}\label{section:tarski-monster-group}
In this section we construct an infinite simple torsion-free group $S$ all of whose proper subgroups are cyclic. 
It has an outer automorphism $\beta$ of order $p$ without non-trivial fixed points, and $\Aut(S) = \Inn(S) \rtimes \langle\beta\rangle$.

Let $F = \langle t\rangle * \langle x\rangle * \langle y\rangle$ be the free product with $t^p = 1$, 
and let $\pi\colon F \to \mathbb{Z}/p$ (written additively) be the projection sending $t$ to 1 and killing $x, y$; 
it plays the role of $\varepsilon$ from Remark~\ref{remark:xi-notation}. 
In this section $N := \ker\pi$ and $F_0 := \langle x\rangle * \langle y\rangle \subseteq N$; 
$\exp_x$ and $\exp_y$ denote exponent sums in $x$ and $y$, and relators are denoted by lowercase letters.

The following proposition collects what we need about $S$; 
its last assertion is used in Section~\ref{section:the-central-extension}
to construct a central extension $K$ of $S$ to which the action of $T$ lifts.
\begin{proposition}\label{prop:simple-monster}
    There is a graded set $\mathcal{R}\subseteq N$ with normal closure $R \subseteq F$ such that the groups $T = F/R$ and $S = N/R$ satisfy
    \begin{enumerate}\itemsep=0mm
        \item $\langle F \mid \mathcal{R} \rangle$ is graded diagrammatically aspherical;
        \item $S$ is an infinite simple torsion-free group with every proper subgroup cyclic;
        \item the conjugation action of $T$ on $S$ identifies $\Aut(S) = T$ and conjugation by $t$ is regular (in particular, $\Out(S) = C_p$);
        \item every finite subgroup of $T$ is conjugate into $\langle t \rangle$.
    \end{enumerate}
    Moreover, there are integers $n \ge 1$ and $h \ge 2$ such that $\mathcal{R}$ contains words $r_5, r_6, r_7 \in \langle x, y\rangle$ with $\exp_y(r_k) = h - 2$ and
     \begin{align}
        \exp_x(r_k) = hn + k(h-1) + 15(h-1)(h-2) 
    \end{align}
    for $k = 5, 6, 7$.
\end{proposition}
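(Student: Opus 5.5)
The plan is to obtain $T$ directly from Obraztsov's embedding theorem, applied to the free product $F=\langle t\rangle*\langle x\rangle*\langle y\rangle$ with the index-$p$ subgroup $N=\ker\pi$. I would invoke the version recalled in the appendix. Given a free product of torsion-free groups (here $N$ is a free product of the conjugates $\langle x\rangle^{t^i},\langle y\rangle^{t^i}$, $0\le i<p$, by Kurosh, hence free of rank $2p$), it produces a graded presentation whose relators are long words satisfying the small cancellation condition $C(\lambda,\mu)$ in the sense of Ol'shanskii. The resulting quotient is simple, torsion-free, and has all proper subgroups cyclic. I will choose the relators inside $N$ and the set $\mathcal{R}$ to be $\langle t\rangle$-invariant, closed under $r\mapsto r^{t}$. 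Then $R$ is normal in $F$, $S=N/R$ has index $p$ in $T=F/R$, and $t$ acts on $S$ as an automorphism of order $p$. Graded diagrammatic asphericity (1) is part of the output of Ol'shanskii's construction, once one checks that $\langle t\rangle$-invariance does not create relators that are cyclic shifts of each other's conjugates. For that, I impose that distinct $t$-translates are not cyclic permutations of one another, which is a generic condition on long words.

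Next I would derive (2)--(4) from the structural statements of Obraztsov's theorem.
\begin{itemize}
\item For (4), every finite-order element of a graded aspherical quotient of a free product is conjugate into a factor (Lemma~\ref{lemma:conjugate-into}). Since $x,y$ have infinite order, the only finite factor is $\langle t\rangle$.
\item For (3), $t$ has no nontrivial fixed points on $S$. A fixed element $s$ would give a subgroup $\langle s,t\rangle$ that is abelian and not cyclic unless trivial. In Obraztsov's groups, the centralizer of $t$ is $\langle t\rangle$, as part of the theorem's description of centralizers and maximal subgroups.
\item To identify $\Aut(S)$ with $T$: $S$ is centerless (simple, non-abelian), so $S\hookrightarrow\Aut(S)$. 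That no further outer automorphisms arise I expect to get from the ``rigid'' clause of Obraztsov's theorem. Its relators encode a chosen ordering of generators that any automorphism must respect, up to the built-in $\langle t\rangle$-symmetry. Failing that, I would add relators that force every automorphism to map $x,y$ into a controlled finite set of conjugacy classes.
\end{itemize}

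The final assertion is the bookkeeping step. The graded set built in Obraztsov's construction contains relators of a prescribed shape $x^{n}\,y\,x^{n+k}\,y\cdots$ with exponents determined by parameters $n$ and $h$; these are relators in $\langle x,y\rangle\subseteq F_0$. I would choose the family of words $r_k$, $k=5,6,7$, explicitly as products of $h$ blocks with $x$-exponents $n+k+\ldots$ and $h-2$ occurrences of $y$. The factor $15(h-1)(h-2)$ comes from the triangular sum of padding exponents used to ensure small cancellation. Then I compute $\exp_x,\exp_y$ directly.

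The main obstacle is making sure these specific words are admissible in the graded construction while keeping $\mathcal{R}$ $t$-invariant and aspherical. I would handle it by inserting them at the first rank of the grading with $n$ large, so that $C(\lambda,\mu)$ holds among them and their $t$-conjugates, and only then run Obraztsov's inductive procedure above them.
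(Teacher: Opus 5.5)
There is a genuine gap: no theorem you cite provides the group $T$ you describe. You propose to obtain $S$ as a monster quotient of $N$ (free of rank $2p$) with a relator set closed under $r\mapsto r^t$. Obraztsov's relators, however, are produced by a well-ordered inductive procedure that is not $t$-equivariant. Moreover, if you apply his theorem to $N$ viewed as a free product of $2p$ infinite cyclic factors, the quotient $H$ is trivial, so Theorem~A(3) gives $\Aut(S)\cong S$. Any action of $t$ would then be inner, hence neither outer nor regular, since an inner automorphism $c_s$ fixes $s$.

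This leaves several of your steps unsupported:
\begin{itemize}
\item Your treatment of $\Aut(S)=T$ (``I expect \ldots; failing that, I would add relators \ldots'') has no backing.
\item Regularity rests on a centralizer description that holds for Obraztsov's groups, not for your modified one.
\item Item (4) has the same problem: Lemma~\ref{lemma:conjugate-into} concerns Obraztsov's group with $m=\infty$, not arbitrary graded aspherical quotients, where proper-power relators create torsion.
\item You would still owe asphericity of $\langle F\mid\mathcal{R}\rangle$ over the free product $F$, whose diagrams involve the factor $\langle t\rangle$, not merely over $N$.
\item ``Inserting'' $r_5,r_6,r_7$ at rank one and then running Obraztsov's procedure changes the construction. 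It would require re-verifying all of his and Ol'shanskii's estimates, and the words $x^nyx^{n+k}y\cdots$ are not the relators that occur there.
\end{itemize}

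The missing idea is that no equivariance has to be arranged by hand. Use $\langle t\rangle$ itself as the factor $G_3=H$, with $G_1=\langle x\rangle$, $G_2=\langle y\rangle$, $g_1,g_2$ trivial, $g_3=\id$ and $m=\infty$. Then $G(1)=F$, $T$ is Obraztsov's group, and $S:=L$.
\begin{itemize}
\item By Theorem~A(1), $x$ and $y$ map into $L$, so $N$ lies in the kernel of $F\to T\to T/L\cong C_p$. Both have index $p$, so they coincide. This gives $\mathcal{R}\subseteq N$ and $S=N/R$, with $R$ the normal closure in $F$ (automatically $t$-invariant).
\item Item (3) is Theorem~A(3) together with Theorem~B, and cyclicity of proper subgroups is Theorem~D.
\item Torsion-freeness follows from Theorem~A(2): an element of $S$ cannot be conjugate into $\langle t\rangle$, since its image in $T/S$ would be nontrivial, and $C_p$ has no elements of even order.
\item For (4), first note $A\cap S=1$, so a finite $A$ embeds in $C_p$ and is cyclic. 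Your element-wise argument skips this step. Then apply Lemma~\ref{lemma:conjugate-into} to a generator.
\item The $r_k$ are already in the presentation as Obraztsov's relators of the second type, $c^{-1}A^ncA^{n+k}c\cdots cA^{n+30(h-2)+k}$ with period $A=a[a^{-1},c^{-1}]^{\ell}$. Ordering $I_1$ and $\Omega$ so that $x$ is minimal lets you take $a=x$, $c=y$. Then $\exp_x(A)=1$ and $\exp_y(A)=0$ give the stated exponent sums.
\end{itemize}
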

\begin{proof}
We apply Obraztsov's theorems with $G_1 = \langle x\rangle$, $G_2 = \langle y\rangle$, $G_3 = H = C_p = \langle t\rangle$, $m = \infty$, $g_1, g_2$ trivial and $g_3$ the identity. 
Since $p$ is odd, all three factors have no involutions. 
Then $G(1) = F$, the images of the $g_i$ generate $H$, and $N_1 = \langle x\rangle$, $N_2 = \langle y\rangle$ are non-trivial. 
Theorems A, B and D give a presentation $T := \langle G(1) \mid \mathcal{R}\rangle$ and a normal subgroup $S := L$ of $T$, infinite and simple, with $T/S \cong C_p$.

Let $\tau\colon F \to T$ and $\kappa\colon T \to T/S$ be the natural projections, so that $R = \ker\tau$ is the normal closure of $\mathcal{R}$, and let $\rho = \kappa\circ\tau\colon F \to T/S \cong C_p$.
By Theorem A(1), $x$ and $y$ map into $S$, so their normal closure $N$ lies in $\ker\rho$;
both have index $p$, as $\pi$ and $\rho$ are onto, so $N = \ker\rho$ and in particular $\mathcal{R} \subseteq N$. 
Moreover, $\tau^{-1}(S) = \ker\rho = N$ and $\tau$ is onto, so $S = \tau(N) = N/R$.

It remains to prove items (2)--(4) and the final assertion. 
Since all $G_i$ are cyclic, Theorem D shows that every proper subgroup of $S$ is cyclic. 
For torsion-freeness, let $X \in S$ be non-trivial. 
If $X$ is conjugate into $\langle x\rangle$ or $\langle y\rangle$, it has infinite order; 
it is not conjugate into $\langle t\rangle$, as its image in $T/S$ would be non-trivial. 
Otherwise, since $C_p$ has no elements of even order, Theorem A(2) shows that $X$ has infinite order.

Item (3) is Theorem A(3), and regularity follows from Theorem B. 
For item (4), let $A \subseteq T$ be finite. 
Since $S$ is torsion-free, $A \cap S = 1$, so $A$ embeds into $T/S \cong C_p$; 
hence $A = \langle u\rangle$ is cyclic. 
By Lemma~\ref{lemma:conjugate-into}, $u$ is conjugate to an element of $\langle x\rangle$, $\langle y\rangle$ or $\langle t\rangle$, and only the last is possible, since $\langle x\rangle$ and $\langle y\rangle$ are torsion-free. 
So $A$ is conjugate into $\langle t\rangle$.

The words $r_k$ are relators of the second type \cite[(2.2), p.~277]{Obraztsov}:
\[r_k = c^{-1}A^n c A^{n+k} c A^{n + 30 + k} \ldots c A^{n + 30(h-2) + k}\]
for suitable elements $a, c$, where $A = a(aca^{-1}c^{-1})^{\ell} = a[a^{-1},c^{-1}]^{\ell}$ with $\ell \geq 1$ is a period (Obraztsov writes commutators as $uvu^{-1}v^{-1}$, so this is his $a[a,c]^{\ell}$);
that one may take this word to be a period is \cite[p.~277]{Obraztsov}, which refers to \cite[Lemma~3.1]{Obraztsov1994}.
We can take $a = x$ and $c = y$. 
For this we well-order $I_1$ so that $\langle x\rangle$ precedes $\langle y\rangle$, and choose the ordering of $\Omega$ so that $x$ is its least element; 
then $x$ is the minimal element of $F(\{A\})$, as required before \cite[(2.2)]{Obraztsov}.
We also take $h = \delta^{-1} \ge 2$ and $n = [n_0/(h+1)]$, with $\delta$, $n_0$ as in \cite[\S2, p.~276]{Obraztsov}. 
Then $\exp_x(A) = 1$ and $\exp_y(A) = 0$, so $\exp_y(r_k) = h - 2$ and 
\[ \exp_x(r_k) = n + \sum_{j = 0}^{h-2}{(n + 30j + k)} = hn + k(h-1) + 15(h-1)(h-2). \]
These relators exist for $5 \le k \le 15$; in particular $r_5, r_6, r_7$ are available.
\end{proof}

\begin{remark}\label{remark:endomorphism-dichotomy}
    As for any simple group, every endomorphism $f\colon S \to S$ is either trivial or injective. 
    If $f$ is injective, then $f(S) \cong S$ is not cyclic, so $f(S)$ is not a proper subgroup and $f$ is an automorphism. 
    Hence every endomorphism of $S$ is either trivial or an automorphism.
\end{remark}

\section{The central extension \texorpdfstring{$K$}{K}}\label{section:the-central-extension}
We first recall some facts about central extensions and perfect groups, following \cite{milnor-k-theory}.
\begin{definition}
    A central extension of a group $Y$ is a surjective homomorphism $X \twoheadrightarrow Y$ with its kernel contained in the center.     
\end{definition}

\begin{definition}
    A group $X$ is called \emph{perfect} if it is equal to its own commutator subgroup: $X = [X, X]$. 
\end{definition}

We will use the following standard fact.
\begin{lemma}[\cite{milnor-k-theory}, Lemma 5.4]\label{lemma:uniqueness-map}
    Let $\pi: Y \to X$ be a central extension and $Z$ be a perfect group.
    If two homomorphisms $f_1, f_2 : Z \to Y$ are such that $\pi \circ f_1 = \pi \circ f_2$, then $f_1 = f_2$.
\end{lemma}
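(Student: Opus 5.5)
The argument only needs the commutator identity $[\pi(u),\pi(v)]$ lifting rule, applied to generators of $Z$ written as products of commutators. Set $\phi(z) := f_1(z) f_2(z)^{-1}$ for $z\in Z$. Since $\pi\circ f_1=\pi\circ f_2$, we have $\phi(z)\in\ker\pi$, and $\ker\pi$ is central in $Y$ by the definition of a central extension. The plan is to show that $\phi$ is a homomorphism $Z\to\ker\pi$, that it kills commutators, and hence that it is trivial because $Z$ is perfect.

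First step: $\phi$ is a homomorphism. Write $f_1(z)=\phi(z)f_2(z)$ with $\phi(z)$ central. Then
\[
f_1(zw)=f_1(z)f_1(w)=\phi(z)f_2(z)\phi(w)f_2(w)=\phi(z)\phi(w)f_2(zw).
\]
Here we move the central element $\phi(w)$ past $f_2(z)$. Comparing with $f_1(zw)=\phi(zw)f_2(zw)$ gives $\phi(zw)=\phi(z)\phi(w)$.

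Second step: $\phi$ is trivial. The target $\ker\pi$ is abelian, being central, so $\phi$ factors through $Z/[Z,Z]$. Since $Z$ is perfect, $Z/[Z,Z]$ is trivial, hence $\phi\equiv 1$ and $f_1=f_2$.

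Alternatively, one can compute directly. For $u,v\in Z$, write $f_1(u)=c f_2(u)$ and $f_1(v)=d f_2(v)$ with $c,d$ central. Then $[f_1(u),f_1(v)]=[f_2(u),f_2(v)]$, because central factors cancel in commutators. So $f_1$ and $f_2$ agree on every commutator, hence on $[Z,Z]=Z$.

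There is no real obstacle here. The only point needing care is using centrality of $\ker\pi$ in $Y$, and not merely its normality, when commuting $\phi(w)$ past $f_2(z)$.
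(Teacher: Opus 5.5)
Your proof is correct and follows essentially the same route as the paper: the discrepancy map into the central kernel (your $f_1(z)f_2(z)^{-1}$, the paper's $f_1(a)^{-1}f_2(a)$) is a homomorphism and so factors through the trivial abelianization of the perfect group $Z$. Your alternative check, that $f_1$ and $f_2$ agree on commutators because central factors cancel, is also valid.
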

\begin{proof}
    Put $d(a) = f_1(a)^{-1} f_2(a)$. 
    By assumption $\pi \circ d = 1$, so the image of $d$ lies in the kernel of $\pi$ and is thus central.
    Hence $d$ is a homomorphism: $d(ab) = f_1(b)^{-1} d(a) f_2(b) = d(a) d(b)$ since $d(a)$ is central. 
    Since its image is abelian, $d$ factors through the abelianization of $Z$, which is trivial because $Z$ is perfect. Hence $d$ is trivial and $f_1 = f_2$.
\end{proof}

We keep the notation of Section~\ref{section:tarski-monster-group} and construct a perfect central extension $K$ of $S$. 
First we specialize Lemma~\ref{lemma:counting-relators} to our presentation.
\begin{lemma}\label{lemma:modified-relator-count}
    Choose a relator $r_0 \in \mathcal{R}$ that is not a proper power in $F$. 
    There is a homomorphism 
    \[\lambda: R/[R,R] \to \mathbb{Z}[C_p]\] satisfying
    \begin{enumerate}\itemsep=0mm
        \item $\lambda(r^f) = \lambda(r) t^{\pi(f)}$ for any $ r\in R, f\in F$;
        \item $\lambda(r_0) = 1$ and $\lambda(r) = 0$ for any $r\in \mathcal{R}$ not conjugate to $r_0^{\pm 1}$.
    \end{enumerate}
\end{lemma}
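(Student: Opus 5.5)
We derive this from Lemma~\ref{lemma:counting-relators}, the free-product analogue of Ol'shanskii's description of the relation module of a graded aspherical presentation. By Proposition~\ref{prop:simple-monster}(1) the presentation $\langle F\mid\mathcal{R}\rangle$ is graded diagrammatically aspherical, so that lemma applies. We expect it to describe the relation module $R/[R,R]$, with the $F$-action by conjugation factoring through $T=F/R$, as a direct sum over a set of representatives $r\in\mathcal{R}$ (up to conjugacy and inversion) of induced modules $\mathbb{Z}[T]\otimes_{\mathbb{Z}[C_r]}\mathbb{Z}$. Here $C_r$ is the image in $T$ of the centralizer of the root of $r$, a finite cyclic group when $r$ is a proper power and trivial otherwise. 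At minimum it should give the universal property we need: a $T$-module homomorphism out of $R/[R,R]$ may be prescribed freely on the classes of the representatives, subject only to the compatibility condition on the stabilizers.

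Step one: define $\lambda$. Since $r_0$ is not a proper power, its summand is the free module $\mathbb{Z}[T]$ on the class of $r_0$. Regard $\mathbb{Z}[C_p]$ as a $T$-module through $T\to T/S\cong C_p$; this map is induced by $\pi$, since $\mathcal{R}\subseteq N=\ker\pi$. Send the generator corresponding to $r_0$ to $1$ and every other summand to $0$. This gives a $T$-module map $\lambda\colon R/[R,R]\to\mathbb{Z}[C_p]$, where the module structure on the target is right multiplication by $t^{\pi(f)}$. That is exactly property~(1), because conjugation $r\mapsto r^f$ is the module action of the image of $f$.

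Step two: check property~(2). We have $\lambda(r_0)=1$ by construction. Take $r\in\mathcal{R}$ not conjugate to $r_0^{\pm1}$. Its class lies in the span of the summand of its own representative $r'$, which is itself not conjugate to $r_0^{\pm1}$. This uses the fact that Lemma~\ref{lemma:counting-relators} identifies the class of $r$ with a $T$-translate, up to sign, of the generator for $r'$. Hence $\lambda(r)=0$. If $\mathcal{R}$ contains several conjugates of $r_0^{\pm1}$, we choose $r_0$ itself as the representative, so no ambiguity arises.

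The main obstacle is extracting the precise form of Lemma~\ref{lemma:counting-relators} in the free-product setting. Relators could be conjugate into the factor $\langle t\rangle$, and the factor $C_p$ carries torsion, so the relation module is not simply $\bigoplus_{\mathcal{R}}\mathbb{Z}[T]$. We must check that a non-proper-power relator contributes a free summand that is a direct summand of the whole relation module, so that maps can be defined on it independently. If the lemma instead gives only a presentation of $R/[R,R]$ by generators, namely the classes of relators, together with relations coming from cyclic symmetry of proper powers, then we verify that the assignment above kills those relations. These relations involve only proper-power relators and their own stabilizers, and $r_0$ is not among them.
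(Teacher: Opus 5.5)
Your proposal is correct and is essentially the paper's argument. Lemma~\ref{lemma:counting-relators} gives the splitting $\bar\lambda_{r_0}\colon R/[R,R]\to\mathbb{Z}[T]$ of the free summand generated by $r_0$, and your map is its composite with $\mathbb{Z}[T]\to\mathbb{Z}[C_p]$. The vanishing on the remaining relators then follows from property~(1) applied to your representatives.

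The one check the paper makes explicit, and you leave implicit in your ``set of representatives'', is why the lemma's convention (no relator conjugate to another relator or its inverse) may be assumed without losing graded asphericity: duplicates are discarded within a rank, and duplicates across ranks are ruled out by asphericity. Your worries about relators conjugate into $\langle t\rangle$, or about torsion in $C_p$, are unnecessary: $\mathcal{R}\subseteq N$, and the lemma is stated for free products of groups without involutions.
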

\begin{proof}
    Lemma~\ref{lemma:counting-relators} assumes that no relator is conjugate to another relator or to its inverse. 
    We may assume this for Obraztsov's presentation: within one rank we discard duplicates, and across ranks it cannot happen, by asphericity. 
    The lemma gives $\lambda_{r_0}\colon R/[R,R] \to \mathbb{Z}[T]$, and we compose it with the map $\mathbb{Z}[T] \to \mathbb{Z}[C_p]$ induced by $T \to C_p$, which exists because $R \subseteq N = \ker\pi$.
\end{proof}
The relator $r_5$ satisfies the assumption of the lemma: from \cite[\S 3]{Obraztsov} the presentation satisfies condition R of \cite[\S\S 25, 34]{Olshanskii}.
In particular, no relator is a proper power (condition R5 guarantees this for relators of the second type, which is the only type the presentation uses given the choice $m = \infty$).
Taking $r_0 = r_5$, the lemma gives the map $\lambda : R/[R,R] \to \mathbb{Z}[C_p]$.

Let $\eta\colon \mathbb{Z}[C_p] \to \mathbb{Z}/p^2$ be the ring homomorphism with $t \mapsto 1+p$ (well defined since $(1+p)^p \equiv 1 \pmod{p^2}$), and let $\chi$ be the composite
\[ \chi := \eta \circ \lambda \circ (R \to R / [R,R]): R \to \mathbb{Z}/p^2 \quad \chi(r^f) = \chi(r) (1+p)^{\pi(f)}. \]
Then $\ker\chi$ is normal in $F$ by property (1) of Lemma~\ref{lemma:modified-relator-count}, and we set $K_1 := F/\ker\chi$ and $K := N/\ker\chi$.
By abuse of notation, we identify $t$ with its image in $K_1$.

Consider the element $w = r_5 r_6^{-2} r_7 \in R$. 
Its exponent sums are $\exp_x(w) = (h-1)(5 + 7 - 2\cdot 6) = 0$ and $\exp_y(w) = 0$, so, since $F_0$ is free on $x$ and $y$, $w \in [F_0,F_0] \subseteq [N,N]$; 
let $z$ denote its image in $K$ and in $K_1$. 
The relators $r_6, r_7$ are not conjugate to $r_5^{\pm1}$, since the values of $\exp_x$ on $r_5, r_6, r_7$ are distinct and non-zero; 
so by Lemma~\ref{lemma:modified-relator-count}, $\lambda(r_5) = 1$ and $\lambda(r_6) = \lambda(r_7) = 0$, and hence $\chi(w) = 1$.

\begin{proposition}\label{prop:K-perfect-extension}
    The group $K$ is a perfect central extension of $S$. 
    Moreover, $Z(K) = \langle z \rangle \cong C_{p^2}$ and coincides with the set of torsion elements of $K$. 
\end{proposition}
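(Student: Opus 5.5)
The plan is to work with the exact sequence
\[1 \to R/\ker\chi \to K = N/\ker\chi \to N/R = S \to 1.\]
First I check that it is a central extension. Since $\ker\chi\subseteq R$, the map $K\to S$ is surjective with kernel $R/\ker\chi$. The map $\chi$ induces an injection $R/\ker\chi\hookrightarrow\mathbb{Z}/p^2$, and the image contains $\chi(w)=1$, so $R/\ker\chi\cong\mathbb{Z}/p^2$ is generated by the image $z$ of $w$.

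For centrality, take $r\in R$ and $f\in N$. Then $\pi(f)=0$, so $\chi(r^f)=\chi(r)$. Hence $r^{-1}r^f\in\ker\chi$, and $R/\ker\chi$ is central in $K$ (though not in $K_1$, where $t$ acts by multiplication by $1+p$).

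Next, perfectness. Since $S$ is simple and non-abelian, $S=[S,S]$, so $K=[K,K]\cdot\langle z\rangle$. But $w\in[F_0,F_0]\subseteq[N,N]$, so $z\in[K,K]$ and therefore $K=[K,K]$.

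It remains to show $Z(K)=\langle z\rangle$ and that this is exactly the torsion of $K$. The image of $Z(K)$ in $S$ is central in $S$, and $Z(S)=1$ because $S$ is simple and non-abelian. Hence $Z(K)\subseteq\langle z\rangle$, and equality follows from centrality. Since $\langle z\rangle$ is finite, every element of it is torsion. Conversely, if $k\in K$ has finite order, its image in $S$ has finite order. Because $S$ is torsion-free (Proposition~\ref{prop:simple-monster}(2)), that image is trivial, so $k\in\langle z\rangle$.

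The only delicate point is that the kernel is all of $\mathbb{Z}/p^2$ rather than a proper subgroup, that is, that $\chi$ is onto. This rests entirely on $\chi(w)=1$, which was established above from Lemma~\ref{lemma:modified-relator-count}; the non-conjugacy of $r_6, r_7$ to $r_5^{\pm1}$ was already verified there. So I expect no real obstacle: the substantive input is the existence of $\lambda$ and the fact that $w$ is a product of commutators, both already in hand.
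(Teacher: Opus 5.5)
Your proposal is correct and follows essentially the same route as the paper. You identify $R/\ker\chi\cong C_{p^2}=\langle z\rangle$ via $\chi(w)=1$ and get centrality from $\chi(r^f)=\chi(r)$ for $f\in N$. Then you deduce $Z(K)=\langle z\rangle$ from $Z(S)=1$, perfectness from $z\in[K,K]$ together with $S=[S,S]$, and the torsion claim from $S$ being torsion-free.
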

\begin{proof}
    Let $A := R / \ker \chi$.
    First note that $\chi$ is onto, so by the first isomorphism theorem $A \cong C_{p^2}$.
    For $n \in N$ and $r \in R$ we have $\chi(r^n) = \chi(r)$, since $N = \ker\pi$; 
    hence $[R,N] \subseteq \ker\chi$.
    Therefore, $A$ is central in $K$ and is generated by $z$ since $\chi(w) = 1$.

    By the third isomorphism theorem, $K/A \cong N/R \cong S$, which has trivial center, so $Z(K) = A$.
    In particular, $K$ is a central extension of $S$ as there is the short exact sequence 
    \[ 1 \to Z(K) \cong C_{p^2} \to K \to S \to 1. \]
    Since $w \in [N,N]$, we have $z \in [K,K]$, so $Z(K) \subseteq [K,K]$; as $K/Z(K) \cong S$ is perfect, so is $K$.
    
    For the last claim, suppose $k \in K$ has finite order. 
    Its image in $S$ has finite order, hence is trivial because $S$ is torsion-free (Proposition~\ref{prop:simple-monster}(2)); so $k \in Z(K)$.
\end{proof}
\begin{remark}\label{remark:action-on-z}
    We have $K_1 = K \rtimes \langle t\rangle$, since $K$ is normal of index $p$ and $t \notin K$ has order $p$. 
    Moreover, $z^t = z^{1+p}$: since $w^t \in R$, $\chi(w^t) = (1+p)^{\pi(t)}\chi(w) = 1+p$ and $\chi$ induces $Z(K) \cong \mathbb{Z} / p^2$. 
    In other words, $t$ acts on $Z(K)$ as multiplication by $1+p$.
\end{remark}

Conjugation by $t$ plays a special role, and we write $\beta(k) = k^t$ for $k \in K$. 
Let $\sigma\colon K \to S$ be the projection.
\begin{proposition}\label{prop:k-endomorphisms}
    Every automorphism of $K$ is conjugation by an element of $K_1$, that is, of the form $\beta^e\circ c_k$ with $e \in \{0,1,\ldots,p-1\}$ and $k \in K$. 
    Every non-trivial endomorphism of $K$ is an automorphism.
    
    Moreover, if $\beta(k) k^{-1} \in Z(K)$, then $k \in Z(K)$. 
\end{proposition}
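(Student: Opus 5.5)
The plan is to push everything down to $S$ along $\sigma$ and lift back using that $K$ is perfect (Lemma~\ref{lemma:uniqueness-map}).

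\emph{Automorphisms.} Let $\phi\in\Aut(K)$. Since $Z(K)$ is characteristic, $\phi$ induces an automorphism $\bar\phi$ of $S=K/Z(K)$. By Proposition~\ref{prop:simple-monster}(3), $\bar\phi$ is conjugation by some element of $T$, which can be written $t^e\bar k$ with $\bar k\in S$. Lift $\bar k$ to $k\in K$ and put $\psi=\beta^e\circ c_k$; this is an automorphism of $K$ because $K$ is normal in $K_1$. The maps $\phi$ and $\psi$ agree after composing with $\sigma$, and $K$ is perfect by Proposition~\ref{prop:K-perfect-extension}. Hence Lemma~\ref{lemma:uniqueness-map}, applied to $\sigma$, gives $\phi=\psi$. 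The order of composition, $\beta^e\circ c_k$ versus $c_k\circ\beta^e$, is harmless because both forms describe the same set of maps.

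\emph{Endomorphisms.} Let $\phi\colon K\to K$ be non-trivial. We have $\phi(Z(K))\subseteq Z(K)$, since $Z(K)$ is exactly the torsion of $K$ and homomorphisms preserve finite order. Thus $\phi$ induces an endomorphism $\bar\phi$ of $S$, which by Remark~\ref{remark:endomorphism-dichotomy} is trivial or an automorphism.

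If $\bar\phi$ is trivial, then $\phi(K)\subseteq Z(K)$ is abelian. Since $K$ is perfect, this forces $\phi=1$, a contradiction.

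So $\bar\phi$ is an automorphism, and as before there is an automorphism $\psi=\beta^e c_k$ of $K$ with $\sigma\phi=\sigma\psi$. Lemma~\ref{lemma:uniqueness-map} (which needs only homomorphisms, not automorphisms) then gives $\phi=\psi$, so $\phi$ is an automorphism.

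\emph{Fixed points modulo the center.} Suppose $\beta(k)k^{-1}\in Z(K)$. Then $\sigma(k)$ is fixed by conjugation by $t$ in $S$. Regularity in Proposition~\ref{prop:simple-monster}(3) means conjugation by $t$ has no non-trivial fixed points in $S$, so $\sigma(k)=1$ and $k\in Z(K)$.

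The only delicate point is the endomorphism case, where one must check that $\phi$ descends to $S$ at all. The torsion characterization of $Z(K)$ handles this; otherwise one would try $\phi(Z(K))\subseteq Z(\phi(K))$, which fails to give centrality in $K$ without surjectivity.
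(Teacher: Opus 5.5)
Your proposal is correct and follows essentially the same route as the paper: you descend to $S$ via $Z(K)$ (characteristic for automorphisms, the torsion subset for endomorphisms), identify the induced map as conjugation by an element of $T$, lift it, and conclude with Lemma~\ref{lemma:uniqueness-map}, with the fixed-point claim coming from regularity. The only cosmetic difference is that you lift $\bar k$ and $t^e$ separately rather than lifting the whole conjugating element through the surjection $K_1\to T$. With the paper's convention $c_y(x)=x^y$, the lift of $c_{t^e\bar k}$ is $c_k\circ\beta^e$, not $\beta^e\circ c_k$, but your closing remark on the order of composition already covers this.
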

\begin{proof}
    Let $\psi \in \Aut(K)$.
    It has to preserve the center $Z(K)$, so it induces an automorphism $\psi'$ of $S$.
    By Proposition~\ref{prop:simple-monster}(3), $\psi'$ is conjugation by some $\tau \in T$.
    Since $\ker\chi \subseteq R$, the projection $\sigma_1\colon K_1 \to T$ is onto; let $\tau_1$ be a preimage of $\tau$.
    The projection $\sigma_1$ restricts to $\sigma$, so  
    \[ \sigma \circ c_{\tau_1} = \sigma \circ \psi.\]
    Then Lemma~\ref{lemma:uniqueness-map} says $\psi = c_{\tau_1}$.

    Now suppose $\psi$ is a non-trivial endomorphism.
    By Proposition~\ref{prop:K-perfect-extension}, $Z(K)$ is the set of elements of finite order,
    so $\psi$ maps $Z(K)$ to itself.
    Hence, $\psi$ induces an endomorphism $\psi'$ of $S$.
    By Remark~\ref{remark:endomorphism-dichotomy} there are two possibilities: $\psi'$ is an automorphism or $\psi'$ is trivial. 
    If it is trivial, then $\psi(K) \subseteq Z(K)$ is abelian and $\psi$ kills $K = [K, K]$, in other words, $\psi$ is trivial.
    If $\psi'$ is an automorphism, the argument above shows that $\psi$ is conjugation by an element of $K_1$.

    For the last claim, the assumption says the image of $k$ in $S$ is fixed by conjugation by $t$, so the image must be trivial by Proposition~\ref{prop:simple-monster}(3). 
\end{proof}

\section{The target group \texorpdfstring{$H$}{H} and locality}\label{section:target}
The target group is built in two more steps.

First, take the semidirect product 
\[ H_1 := K \rtimes G \]
where $g$ acts on $K$ by $\beta^{\varepsilon(g)}$.
We write elements of $H_1$ as $gk$ with the product rule 
\[ (g_1 k_1) (g_2 k_2) = g_1 g_2 \cdot \beta^{\varepsilon(g_2)}(k_1) k_2. \]

In particular $k^a = k$ and $k^b = \beta(k)$ for $k \in K$.
Let $\Delta = \langle qz^{-1}\rangle$. 
Since $\varepsilon(q) = 0$, $q$ centralizes $K$, in particular $z$; 
hence $(qz^{-1})^n = q^nz^{-n}$, which is $1$ if and only if $p^2 \mid n$, so $\Delta \cong C_{p^2}$.
The subgroup $\Delta$ is normal: $K$ and $a$ centralize $qz^{-1}$, 
while $q^b = q^{1+p}$ by \eqref{eq:action-on-comm} and $z^b = \beta(z) = z^{1+p}$ by Remark~\ref{remark:action-on-z}, so $(qz^{-1})^b = (qz^{-1})^{1+p}$.

Finally, take $H := H_1 / \Delta$ (in other words, we identify $q$ and $z$ in $H_1$).
As above, $\Delta \cap G = \Delta \cap K = 1$, so $G$ and $K$ embed into $H$; we identify them with their images.

\begin{lemma}\label{lemma:comm-is-perfect}
    We have $K = [H,H]$.
    In particular, $H$ is not solvable, and hence not nilpotent.
\end{lemma}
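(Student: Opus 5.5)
We prove the two inclusions $K \subseteq [H,H]$ and $[H,H] \subseteq K$ separately, and then deduce non-solvability from the perfectness of $K$.

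For $K \subseteq [H,H]$: by Proposition~\ref{prop:K-perfect-extension}, $K$ is perfect, so $K = [K,K] \subseteq [H,H]$. Here we use that $K$ embeds in $H$, as noted right before the lemma.

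For $[H,H] \subseteq K$: first we check that $K$ is normal in $H$. It is normal in $H_1 = K \rtimes G$, hence its image is normal in $H$. Next we show that $H/K$ is abelian. Since $H = H_1/\Delta$, we have $H/K \cong H_1/(K\Delta)$. Moreover $H_1/K \cong G$, and under this isomorphism the image of $K\Delta$ is the image of $\Delta = \langle q z^{-1}\rangle$, which is $\langle q\rangle = G'$ because $z \in K$. Hence $H/K \cong G/G'$, which is abelian, and therefore $[H,H] \subseteq K$. Combining with the first inclusion gives $K = [H,H]$.

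Finally, $H$ is not solvable. Since $H' = K$ is perfect, the derived series stabilizes at $H^{(n)} = K$ for all $n \ge 1$. Now $K \neq 1$, since $K$ surjects onto the infinite group $S$, so the derived series never reaches $1$. Nilpotent groups are solvable, so $H$ is not nilpotent either.

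The only step needing care is the identification $H/K \cong G/G'$, namely tracking the image of $\Delta$ modulo $K$. This is routine because $z \in K$.
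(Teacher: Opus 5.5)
Your proposal is correct and follows essentially the same route as the paper: both show $H/K \cong H_1/K\Delta \cong G/\langle q\rangle = G/G'$ by tracking the image of $\Delta$ under $H_1 \to G$ (so $[H,H]\subseteq K$), and both get $K=[K,K]\subseteq[H,H]$ from perfectness. Your explicit derived-series argument for non-solvability, using that $K\neq 1$ surjects onto $S$, just spells out what the paper leaves implicit.
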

\begin{proof}
    The subgroup $K$ is the kernel of the surjection $p_1\colon H_1 \to G$, $gk \mapsto g$, which sends $qz^{-1}$ to $q$; 
    so $p_1(K\Delta) = \langle q\rangle$ and
    \[ H / K \cong (H_1 / \Delta) / (K\Delta / \Delta) \cong H_1 / K\Delta \cong G / (p_1(K\Delta)) = G / \langle q \rangle = G / G'.\] 
    Hence $[H,H] \subseteq K$, and conversely $K = [K,K] \subseteq [H,H]$ because $K$ is perfect.
\end{proof}

\begin{lemma}\label{lemma:conjugate-into-g}
    Every finite subgroup of $H$ lies in a conjugate of $G$. 
\end{lemma}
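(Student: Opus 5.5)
Let $A\subseteq H$ be a finite subgroup. The plan is to push $A$ down to $T$, use Proposition~\ref{prop:simple-monster}(4) to conjugate it into a controlled position, and then lift back.

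\textbf{Step 1: a map from $H$ to $T$.} There is a homomorphism $\Theta\colon H\to T$, $gk\mapsto t^{\varepsilon(g)}\sigma(k)$ (with $\sigma_1$ on $K\rtimes\langle t\rangle=K_1$). It is well defined on $H_1$ because $g$ acts on $K$ by $\beta^{\varepsilon(g)}$, which is conjugation by $t^{\varepsilon(g)}$. It kills $\Delta$ because $\varepsilon(q)=0$ and $\sigma(z)=1$. Its kernel is $\{gk : \varepsilon(g)=0,\ \sigma(k)=1\}\Delta/\Delta = D\,Z(K)\Delta/\Delta$, which is the image of $D$ in $H$, since $Z(K)=\langle z\rangle$ is identified with $\langle q\rangle\subseteq D$. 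So there is an exact sequence
\[1\to D\to H\xrightarrow{\Theta} T\to 1,\]
and $\Theta(G)=\langle t\rangle$.

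\textbf{Step 2: conjugating in $T$.} The image $\Theta(A)$ is finite, so by Proposition~\ref{prop:simple-monster}(4) it is conjugate into $\langle t\rangle$. Since $\Theta$ is onto, we may conjugate $A$ in $H$ so that $\Theta(A)\subseteq\langle t\rangle=\Theta(G)$. Then
\[A\subseteq\Theta^{-1}(\langle t\rangle)=G\cdot\ker\Theta = GD=G.\]
This would already finish the proof, so the crux is to double-check that $\ker\Theta$ really lies inside $G$.

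\textbf{Step 3: verifying the kernel.} Let $gk\Delta\in\ker\Theta$. Then $\varepsilon(g)=0$ and $\sigma(k)=1$, so $k\in Z(K)=\langle z\rangle$, say $k=z^n$. Modulo $\Delta$ we have $z\equiv q$, hence $gk\equiv gq^n\in G$. Thus $\ker\Theta\subseteq G$; more precisely $\ker\Theta=D$ embedded in $H$.

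The main obstacle is the well-definedness of $\Theta$ on $H_1$, i.e.\ checking the semidirect product compatibility $\Theta\bigl((g_1k_1)(g_2k_2)\bigr)=\Theta(g_1k_1)\Theta(g_2k_2)$. Under the product rule this amounts to
\[t^{\varepsilon(g_1)+\varepsilon(g_2)}\,\sigma\bigl(\beta^{\varepsilon(g_2)}(k_1)\bigr)\sigma(k_2) \;=\; t^{\varepsilon(g_1)}\sigma(k_1)\,t^{\varepsilon(g_2)}\sigma(k_2).\]
This identity holds because $\sigma(\beta(k))=t^{-1}\sigma(k)t$ in $T$, and because $\varepsilon$ is a homomorphism.

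Everything else follows from earlier results: Proposition~\ref{prop:simple-monster}(4), Proposition~\ref{prop:K-perfect-extension} giving $\ker\sigma=Z(K)=\langle z\rangle$, and the identification of $q$ with $z$ in $H$.
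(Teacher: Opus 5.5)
Your proposal is correct and follows the paper's proof: the same homomorphism $H\to T$, $gk\mapsto t^{\varepsilon(g)}\sigma(k)$, the same check that $\Theta^{-1}(\langle t\rangle)=G$ using $\ker\sigma=Z(K)=\langle z\rangle$ and $z=q$ in $H$, and the same use of Proposition~\ref{prop:simple-monster}(4) followed by lifting the conjugator. The one step you use without stating it is $S\cap\langle t\rangle=1$, which is what splits $t^{\varepsilon(g)}\sigma(k)=1$ into $\varepsilon(g)=0$ and $\sigma(k)=1$; the paper states this explicitly.
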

\begin{proof}
    Consider the map $\rho_1 : H_1 \to T$ given by $\rho_1(gk) = t^{\varepsilon(g)} \sigma(k)$. 
    It is a homomorphism: 
    \begin{align*}
    \rho_1(g_1 k_1 g_2 k_2) = \rho_1(g_1g_2 \beta^{\varepsilon(g_2)}(k_1) k_2) = t^{\varepsilon(g_1g_2)} \sigma(\beta^{\varepsilon(g_2)}(k_1) k_2) = \\
    = t^{\varepsilon(g_1)} t^{\varepsilon(g_2)} t^{-\varepsilon(g_2)} \sigma(k_1) t^{\varepsilon(g_2)} \sigma(k_2) 
    = \rho_1(g_1 k_1) \rho_1(g_2 k_2).  
    \end{align*}
    (We used that $K_1 \to T$ restricts to $\sigma$ and fixes $t$.) 
    Since $\varepsilon(q) = 0$ and $\sigma(z) = 1$, $\rho_1(qz^{-1}) = 1$, 
    so $\rho_1$ factors through $\rho\colon H \to T$. 
    It is surjective, since $\rho(b) = t$, $\rho(K) = S$ and $T = S\langle t\rangle$.
    
    Observe that $\rho^{-1}(\langle t \rangle) = G$.
    Indeed, if $\rho(gk) \in \langle t\rangle$ then $\sigma(k) = 1$ using that $S \cap \langle t \rangle = 1$.
    Thus $k\in Z(K) = \langle z \rangle$ and $z = q \in G$ in $H$, so $gk \in G$.

    Now suppose $A \subseteq H$ is finite. 
    Then $B := \rho(A)$ is a finite subgroup of $T$ and by Proposition~\ref{prop:simple-monster}(4)  it is conjugate into $\langle t \rangle$ by some $u \in T$, in other words $B^u \subseteq \langle t \rangle$.
    Choose $h\in H$ with $\rho(h) = u$.
    Then $A^h \subseteq (\rho^{-1}B)^h = \rho^{-1}(B^u) \subseteq \rho^{-1}\langle t \rangle \subseteq G$.
\end{proof}

We can now prove that $G \hookrightarrow H$ is local.
\begin{theorem}
Every homomorphism $G \to H$ extends to exactly one endomorphism of $H$.
\end{theorem}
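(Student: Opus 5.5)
Fix a homomorphism $\varphi\colon G\to H$. The plan is to split according to Proposition~\ref{prop:p-group-endomorphisms}, after first moving $\varphi(G)$ into $G$. Since $\varphi(G)$ is finite, Lemma~\ref{lemma:conjugate-into-g} gives $h\in H$ with $\varphi(G)^h\subseteq G$. Then $\alpha:=c_h\circ\varphi$ is an endomorphism of $G$, and it suffices to treat $\alpha$ and afterwards compose with $c_{h^{-1}}$. Existence and uniqueness are handled separately.

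\emph{Existence.} If $\alpha(G')=1$, then $\alpha$ factors through $G/G'\cong H/K$ (Lemma~\ref{lemma:comm-is-perfect}). The composite $H\to H/K\cong G/G'\to G\subseteq H$ therefore extends $\alpha$, because $G\to H/K$ is exactly the abelianization map. If instead $\alpha$ is an automorphism, set $\alpha(q)=q^{(1+p)^d}$ and define $\Psi$ on $H_1$ by $\Psi(gk)=\alpha(g)\,\beta^d(k)$. To see that $\Psi$ is a homomorphism:
\begin{itemize}
\item $\beta^d$ commutes with $\beta$;
\item $\varepsilon\circ\alpha=\varepsilon$ by Remark~\ref{remark:xi-notation}, so the twisted product rule is respected.
\end{itemize}
It also kills $\Delta$, because $\beta^d(z)=z^{(1+p)^d}$ by Remark~\ref{remark:action-on-z}, and this matches $\alpha(q)$. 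So $\Psi$ descends to an endomorphism of $H$ extending $\alpha$.

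\emph{Uniqueness.} Let $\Phi_1,\Phi_2$ be endomorphisms of $H$ agreeing on $G$. Since $H=GK$, it suffices to show that they agree on $K$. Each $\Phi_i$ maps $K=[H,H]$ into $K$, so $\Phi_i|_K$ is an endomorphism of $K$. By Proposition~\ref{prop:k-endomorphisms} it is either trivial or of the form $\beta^{e}\circ c_k$, and the case is decided by the common value on $z=q\in G$:
\begin{itemize}
\item If $\Phi_i(z)=1$, then $\Phi_i|_K$ is not injective, so both restrictions are trivial.
\item Otherwise both are automorphisms $\beta^{e_i}c_{k_i}$, with $e_i$ determined by the action on $Z(K)$ via $z\mapsto z^{(1+p)^{e_i}}$. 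Hence $e_1=e_2$.
\end{itemize}
In the second case, $\Phi_i(k^b)=\Phi_i(k)^{\Phi(b)}$, so $c_{k_1}$ and $c_{k_2}$ both intertwine conjugation by $\Phi(b)$. The goal is to deduce $k_1k_2^{-1}$ central.

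\emph{Main obstacle.} Let $u=k_2k_1^{-1}$; I need $u\in Z(K)$. From the compatibility with $b$ I expect that $c_u$ commutes with the automorphism of $K$ induced by $\Phi(b)$. This automorphism is conjugation by an element of $K_1$ whose image in $T$ lies outside $S$, because $\varepsilon(\alpha(b))=1$ (in $H$, use $\rho$). The image $\bar u$ of $u$ in $S$ then commutes, modulo $Z(S)=1$, with an element $\tau\in T\setminus S$. Replacing $\tau$ by $\tau^{e}$ and adjusting by an element of $S$ commuting with $\bar u$, I would then need the cyclic centralizer of $\bar u$ in $S$ to force $\bar u$ to be fixed by a conjugate of $t$. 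Regularity of $t$ (Proposition~\ref{prop:simple-monster}(3)), i.e.\ the last claim of Proposition~\ref{prop:k-endomorphisms}, then gives $\bar u=1$.

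This centralizer argument is the step I consider delicate. An alternative is to avoid it by first conjugating so that $\Phi_i|_K$ is normalized (for instance, $\Phi(b)=b$ after conjugating by $h$), and then applying the fixed-point-free property directly to $u$.
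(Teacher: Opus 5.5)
\textbf{There is a genuine gap in the uniqueness part, at exactly the step you flagged as the main obstacle.}

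Several parts of your proposal are correct and follow the paper. These are the normalization $\alpha(G)\subseteq G$, the existence part, and the first half of uniqueness: both restrictions to $K$ are trivial, or both are $\beta^{e}\circ c_{k_i}$ with the same $e$, read off from $\Phi_i(z)=\alpha(q)$.

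The gap is that you never prove $k_1,k_2$ (or $u$) are central, and the route you sketch would not do it. Regularity in Proposition~\ref{prop:simple-monster}(3) concerns conjugation by non-trivial powers of $t$ only. Knowing just that $\bar u$ commutes with some $\tau\in T\setminus S$ gives nothing, since such a $\tau$ can have non-trivial fixed points in $S$: if $\tau$ has infinite order, $\tau^{p}\in S\setminus\{1\}$ is one. Cyclicity of centralizers in $S$ does not turn $\tau$ into a conjugate of a power of $t$.

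Your alternative is also off as stated. You cannot in general arrange $\Phi(b)=b$ by conjugating. Every conjugate of $b$ lies in $bK$ because $H/K$ is abelian, and $bK\cap G=bG'$. So for the automorphism $a\mapsto a,\ b\mapsto ab$ of $G$, no conjugation works. The paper obtains $\theta(b)=b$ by composing with $\theta_\alpha^{-1}$ instead.

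The missing observation is that no centralizer argument is needed. After your normalization, $\Phi_i(b)=\alpha(b)$ lies in $G$, and every $g\in G$ acts on $K$ by $k^{g}=\beta^{\varepsilon(g)}(k)$. Since $\varepsilon(\alpha(b))=\varepsilon(b)=1$ by Remark~\ref{remark:xi-notation}, conjugation by $\Phi_i(b)$ on $K$ is exactly $\beta$. In other words, your $\tau$ is $\rho(\alpha(b))=t$ itself, not an arbitrary element of $T\setminus S$. Hence $\Phi_i|_K\circ\beta=\beta\circ\Phi_i|_K$, that is, $\beta^{e}c_{k_i}\beta=\beta^{e}\beta c_{k_i}$. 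So $c_{k_i}\beta=\beta c_{k_i}=c_{\beta(k_i)}\beta$, and therefore $c_{k_i}=c_{\beta(k_i)}$. This gives $\beta(k_i)k_i^{-1}\in Z(K)$, so $k_i\in Z(K)$ by the last claim of Proposition~\ref{prop:k-endomorphisms}, and $\Phi_1|_K=\Phi_2|_K=\beta^{e}$.

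With this filled in, comparing two extensions directly is only a cosmetic variant of the paper's argument. The paper composes with $\theta_\alpha^{-1}$ to reduce to an endomorphism fixing $G$ pointwise, and then runs the same computation with $e=0$.
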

\begin{proof}
    Suppose $\alpha: G \to H$ is a homomorphism.
    Its image is finite, so by Lemma~\ref{lemma:conjugate-into-g} it lies in $uGu^{-1}$ for some $u \in H$; replacing $\alpha$ by $c_u\circ\alpha$, we may assume $\alpha(G) \subseteq G$.
    
    \emph{Existence}.     
    By Proposition~\ref{prop:p-group-endomorphisms}, $\alpha$ either kills $G'$ or is an automorphism.
    If $\alpha$ is an automorphism, then $\varepsilon \circ \alpha = \varepsilon$ and $\alpha(q) = q^{(1+p)^d}$ for some $d$. 
    Put 
    \[ \theta_\alpha : H_1 \to H_1\quad \theta_\alpha(gk) = \alpha(g) \cdot \beta^d(k). \] 
    It is a homomorphism: on one side
    \begin{align*}
        \theta_\alpha(g_1 k_1 g_2 k_2) = \theta_\alpha(g_1 g_2 \beta^{\varepsilon(g_2)}(k_1) k_2) = \alpha(g_1 g_2) \beta^{d+ \varepsilon(g_2)}(k_1) \beta^{d}(k_2);
    \end{align*}
    and on the other 
    \begin{align*}
        \theta_\alpha(g_1 k_1) \theta_\alpha(g_2 k_2) = 
        \alpha(g_1) \beta^d(k_1) \alpha(g_2) \beta^d(k_2) = \\ 
        = \alpha(g_1) \alpha(g_2) \beta^{\varepsilon(\alpha(g_2))}(\beta^d(k_1)) \beta^d(k_2)
    \end{align*}
    Using that $\varepsilon \circ \alpha = \varepsilon$, we see the two expressions coincide.
    It is bijective: the inverse homomorphism is built in the same way from $\alpha^{-1}$.
    
    By Remark~\ref{remark:action-on-z},
    \[ \theta_\alpha(qz^{-1}) = \alpha(q)\beta^d(z)^{-1} = q^{(1+p)^d} z^{-(1+p)^d} = (qz^{-1})^{(1+p)^d}, \]
    so $\theta_\alpha$ maps $\Delta$ onto itself ($(1+p)^d$ is a unit modulo $p^2$) and induces an automorphism of $H$, which extends $\alpha$ because $\theta_\alpha(g) = \alpha(g)$ for $g \in G$. 
    If $\alpha$ kills $G'$, it factors as $G \to G/G' \to G$, and the composite $H \to H/K \cong G/G' \to G$ extends $\alpha$, since $g \mapsto gG' \mapsto \alpha(g)$.
    
    \emph{Uniqueness}. 
    Let $\theta$ be an endomorphism extending $\alpha$.
    Since $K = [H, H]$, $\theta(K) = \theta([H,H]) \subseteq [H,H] = K$, and $\theta|_K$ is either trivial or an automorphism by Proposition~\ref{prop:k-endomorphisms}.

    If $\alpha$ kills $G'$, then $\theta(z) = \theta(q) = \alpha(q) = 1$, so $\theta|_K$ is not injective and has to be trivial. 
    So $\theta$ is determined by its values on $G$ and $K$, which generate $H$.

    Suppose $\alpha$ is an automorphism. 
    By replacing $\theta$ by $\theta \circ \theta_\alpha^{-1}$, we may assume $\theta$ fixes $G$ pointwise. 
    Then $\theta(z) = \theta(q) = q = z$, so $\theta|_K$ is non-trivial, hence bijective. 
    Write $\theta|_K = \beta^e \circ c_k$ with $k\in K$ and $e \in \{ 0,1,\ldots,p-1\}$. 

    First, observe that the condition $z = \theta(z) = \beta^e(c_k(z)) = \beta^e(z) = z^{(1+p)^e}$ forces $e = 0$, since $1+p$ has order $p$ modulo $p^2$, so $\theta|_K = c_k$. 
    Second, since $\theta$ fixes $b$, $\theta|_K$ commutes with $\beta$:
    $\theta|_K(\beta(m)) = \theta|_K(m^b) = \theta|_K(m)^{\theta(b)} = \theta|_K(m)^b$.
    Thus $c_k\circ\beta = \beta\circ c_k = c_{\beta(k)}\circ\beta$, and since $\beta$ is surjective, $c_k = c_{\beta(k)}$, i.e. $\beta(k)k^{-1} \in Z(K)$. 
    By Proposition~\ref{prop:k-endomorphisms}, $k \in Z(K)$, so $\theta|_K = \id$.
    Since $\theta$ is the identity on $K$ and on $G$, $\theta$ has to be the identity.
\end{proof}

We have proved: 
\begin{theorem}
    For every odd prime $p$, there are a group $G$ of order $p^6$ and class 3, a countably infinite non-nilpotent group $H$ and a local embedding $f\colon G \to H$. 
    In particular, $L_fG \cong H$, so the localization $L_f$ does not preserve nilpotency.
\end{theorem}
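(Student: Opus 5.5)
The statement collects what has already been established, so the plan is to assemble the pieces. Take $G$ as in Section~\ref{section:finite-p-group}. Its order $p^6$ and class $3$ come from the first lemma of that section. Take $H = (K\rtimes G)/\Delta$ as in Section~\ref{section:target}, with $f$ the composite $G\hookrightarrow H_1\to H$. This composite is injective because $\Delta\cap G=1$, as observed when $H$ was defined.

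Locality of $f$ means that $f^\ast\colon\Hom(H,H)\to\Hom(G,H)$ is a bijection. This is exactly the preceding theorem: every homomorphism $G\to H$ extends uniquely to an endomorphism of $H$. Non-nilpotence is Lemma~\ref{lemma:comm-is-perfect}, since $[H,H]=K$ is perfect and nontrivial. Then $L_fG\cong H$ follows from the general fact quoted in the introduction: if $f\colon G\to H$ is local, then $L_fG\cong H$. The reason is that $H$ is $f$-local, and any $f$-local $Y$ satisfies $\Hom(H,Y)\cong\Hom(G,Y)$, so $f$ itself is the universal map.

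It remains to check that $H$ is infinite and countable.
\begin{itemize}
\item \emph{Infinite.} $K$ surjects onto the infinite group $S$ (Proposition~\ref{prop:K-perfect-extension}), and $K$ embeds in $H$.
\item \emph{Countable.} $F=\langle t\rangle*\langle x\rangle*\langle y\rangle$ is countable, so $K=N/\ker\chi$ is countable. Hence $H_1=K\rtimes G$ is countable, and so is its quotient $H$.
\end{itemize}

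No step here is a genuine obstacle. The only point needing a word is the standard implication ``local $\Rightarrow L_fG\cong H$'', which I would justify in one line via the universal property as above.
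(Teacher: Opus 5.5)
Your proposal is correct and takes the same approach as the paper, which likewise obtains this theorem by combining the first lemma of Section~\ref{section:finite-p-group}, Lemma~\ref{lemma:comm-is-perfect}, and the preceding locality theorem. Your explicit checks that $H$ is countable and infinite, and your one-line universal-property justification of $L_fG\cong H$, fill in points the paper leaves implicit.
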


\appendix
\section{Group presentations}
This appendix collects the background on group presentations needed to state the results used to construct $S$. 
We follow \cite{Olshanskii} closely; the main tool is van Kampen diagrams, cell decompositions of a surface carrying additional data.

\subsection{Van Kampen diagrams}\label{subsec:vankampen}
We begin with a brief, informal introduction. 
Let $X$ be a surface.
An $n$-angular cell $(\Pi, f)$ is described by a nice enough continuous map $f : P \to X$, where $P$ is an $n$-gon in the plane and $\Pi = f(P)$.
A cell decomposition of $X$ is a finite collection $\{ (\Pi_i, f_i)\}_{i=1}^m$ with $\{ \Pi_i \}$ covering $X$ and satisfying several properties (similar to the standard definition of a CW-decomposition).

Fix an alphabet $\mathcal{A}$, write $\mathcal{A}^1 := \mathcal{A} \cup \mathcal{A}^{-1} \cup \{1\}$, 
let $F$ be the free group on $\mathcal{A}$, and let $G$ be given by the presentation
\begin{align}\label{eq:rep}
G := \langle \mathcal{A} \mid \mathcal{R} \rangle.
\end{align}

\begin{definition}
A cell decomposition $\Delta$ of an oriented surface $X$ is called a \emph{map} on $X$.
The orientation on $X$ defines an orientation of $\Delta$: we consider edges of $\Delta$ to be oriented; thus, with every edge $e$ of $\Delta$ there is the associated edge $e^{-1}$ with the opposite orientation. 

If $X$ is a disc (a sphere), $\Delta$ is called \emph{circular} (\emph{spherical}).
The boundary of a circular $\Delta$ is always traversed clockwise.
\end{definition}
\begin{definition}
If a component of the boundary consists of edges $e_1\ldots e_n$ listed in accordance with the orientation, then the path $e_1\ldots e_n$ is a loop and it is called the \emph{contour} of $\Delta$.
Similarly, we define the contour of any cell $\Pi$, denoted $\partial \Pi$.
We consider contours up to a cyclic shift. 
A circular $\Delta$ has a single contour.
\end{definition}

\begin{definition}
A map $\Delta$ equipped with an assignment $e \mapsto \phi(e) \in \mathcal{A}^{1}$ is called a \emph{diagram over} $\mathcal{A}$ if $\phi(e^{-1}) = \phi(e)^{-1}$ for any edge $e$. 
\end{definition}
\begin{definition}
Consider a path $p = e_1 \ldots e_n$ in a diagram $\Delta$ over $\mathcal{A}$. 
Define its \emph{label} as $\phi(p) := \phi(e_1) \ldots \phi(e_n)$. 
If $n = 0$, by convention $\phi(p) = 1$.  
The label of a contour of a cell or a diagram is defined up to cyclic shift, in other words, it is a cyclic word. 
Two words are \emph{visually equal}, written $\equiv$, if they coincide letter by letter.
\end{definition}

\begin{definition}
    A cell of a diagram $\Delta$ is called an $\mathcal{R}$-cell if the label $\phi(p)$ of its contour is visually equal (up to cyclic conjugacy) to some word $R \in \mathcal{R}$ or $R^{-1}$, or to some word obtained from $R, R^{-1}$ by insertions of the symbol 1.
\end{definition}

\begin{definition}\label{def:0-cells}
    A cell $\Pi$ is called a 0-cell if either the label of its contour $e_1\ldots e_n$ is visually equal to $\phi(e_1) \ldots \phi(e_n)$ with $\phi(e_i) \equiv 1$, or, for some $i \neq j$, $\phi(e_i) \equiv a, \phi(e_j) \equiv a^{-1}, \phi(e_k) \equiv 1$ for any $k \neq i, j$. 
    Such a label is $1$ in the free group.
    Edges labelled 1 are called \emph{0-edges}; the others are called \emph{$\mathcal{A}$-edges}.
\end{definition}

\begin{definition}
    A diagram on a surface $X$ over a presentation \eqref{eq:rep} is any diagram over the alphabet $\mathcal{A}$ whose cells are $\mathcal{R}$-cells or 0-cells.   
\end{definition}
\begin{lemma}[van Kampen Lemma]
    Let $W$ be an arbitrary non-empty word in the alphabet $\mathcal{A}^1$.
    Then $W = 1$ in the group $G$ with presentation \eqref{eq:rep} if and only if there exists a circular diagram over \eqref{eq:rep} whose contour label is visually equal to $W$.
\end{lemma}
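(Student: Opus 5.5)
The statement to prove is the van Kampen Lemma: a word $W$ is trivial in $G=\langle\mathcal{A}\mid\mathcal{R}\rangle$ if and only if some circular diagram over \eqref{eq:rep} has contour label visually equal to $W$. I will prove the two implications separately.

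\emph{Diagram implies relation.} Suppose a circular diagram $\Delta$ with contour label $W$ is given. I induct on the number of cells. If $\Delta$ has one cell, its contour is the contour of $\Delta$ up to cyclic shift. Then $W$ is a cyclic conjugate of some $R^{\pm1}$ with $1$'s inserted, or a 0-cell label. In either case $W=1$ in $G$.

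For more cells, I cut $\Delta$ along a suitable path so as to remove one cell $\Pi$ adjacent to the boundary. Choose a vertex $o$ on the contour and a path $s$ from $o$ to a vertex of $\partial\Pi$. Then $W$ is equal in the free group to a product $U\,\phi(s)\phi(\partial\Pi)\phi(s)^{-1}\,U'$. Here $UU'$ is the contour label of the smaller diagram obtained by deleting $\Pi$; after thickening if necessary it remains circular, or it splits into a wedge of discs, which are handled one by one. Each $\phi(\partial\Pi)$ is $1$ in $G$. Induction then gives $W\in$ the normal closure of $\mathcal{R}$, and in fact expresses $W$ as a product of conjugates $\prod f_i R_i^{\pm1} f_i^{-1}$.

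The cleaner route is to pick $\Pi$ with a boundary edge in common with $\partial\Delta$, or else a boundary vertex. The degenerate case, where $\partial\Delta$ is not a simple curve, should be handled by noting that $\Delta$ is then a union of subdiagrams glued at cut vertices.

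\emph{Relation implies diagram.} Suppose $W=1$ in $G$. Then in $F$ the word $W$ is freely equal to a product $\prod_{i=1}^{k} f_i R_i^{\pm1} f_i^{-1}$. Build the ``bouquet of lollipops'' diagram: from a base point draw $k$ stems labelled $f_i$, each ending in a disc cell with contour $R_i^{\pm1}$, arranged in clockwise order. This is a circular diagram (after thickening the stems with 0-edges if stems are to be honest 2-dimensional) whose contour label is the unreduced word $V=\prod f_iR_i^{\pm1}f_i^{-1}$.

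It remains to pass from $V$ to $W$ by a sequence of elementary free insertions and deletions $uaa^{-1}v\leftrightarrow uv$, plus insertions of $1$. Each such step is realised by gluing a 0-cell onto the boundary:
\begin{itemize}
\item a cancellation $aa^{-1}$ is realised by attaching a bigon 0-cell with label $a\,a^{-1}$ (padded with a 1-edge), which replaces those two boundary edges by a single 0-edge;
\item the 1-letters are then absorbed by 0-cells of the first kind in Definition~\ref{def:0-cells};
\item insertions of $aa^{-1}$ are realised symmetrically.
\end{itemize}
This is where the 0-cells are essential: they let one avoid the folding of edges that would break the surface structure.

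\emph{Main obstacle.} I expect the hardest step to be the topological bookkeeping in both directions. One must check that cutting along a path, or attaching 0-cells, keeps the result a genuine cell decomposition of a disc. For this I would lean on the informal definition of maps and, as in \cite{Olshanskii}, allow 0-edges and 0-cells freely so that every gluing is along arcs and stays planar.
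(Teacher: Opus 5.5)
Your proposal is correct and matches the paper's sketch of the `only if' direction: realise an expression of $W$ as a product of conjugates of relators by a diagram, then paste 0-cells to cancel adjacent $aa^{-1}$ and adjust the $1$'s. Your converse is the standard induction on the number of cells, which the paper leaves implicit. Two small repairs are needed: a lone $1$-edge between two $\mathcal{A}$-edges cannot be absorbed by 0-cells of the first kind, so use a triangular 0-cell with label $a\,1\,a^{-1}$ there; and in the converse, split the remainder into smaller discs rather than re-thickening it with 0-cells, since thickening can raise the cell count you are inducting on.
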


\emph{The idea of the proof:} in the ``only if'' direction, we take an expression of $W$ in the free group as a member of the normal closure of $\mathcal{R}$ and construct a diagram with the contour visually equal to that expression. 
This diagram is then transformed into the required diagram by means of pasting 0-cells using a number of transformations, one of which is cancellation of adjacent mutually inverse letters.

We need to be able to cancel not only letters on edges, but also to cancel cells with each other. 
The idea is that in certain cases we can replace two ``cancellable'' cells by pasting in a circular diagram without $\mathcal{R}$-cells, thus reducing the number of $\mathcal{R}$-cells in the diagram by two.  
However, two ``cancellable'' cells need not form a subdecomposition of $\Delta$; they can still be cancelled after inserting some 0-cells, a procedure called \emph{0-refinement}.
One important particular case is \emph{0-bordering}: an $n$-cell $\Pi$ is replaced by a cell $\Pi_0$
with the same labels, called the \emph{copy} of $\Pi$, surrounded by $n$ quadrilateral 0-cells
whose new edges are labelled $1$.

\subsection{Graded diagrams and asphericity}\label{subsec:graded}
Graded presentations arise naturally when relators are defined recursively, rank by rank.
\begin{definition}
    Let $\mathcal{R}$ be a set of words in a group alphabet $\mathcal{A} \cup \mathcal{A}^{-1}$ and suppose it is decomposed into subsets $\mathcal{R} = \bigcup_{i=1}^{\infty} \mathcal{R}_i$ in such a way that a word in $\mathcal{R}_i$ cannot coincide with a cyclic conjugate of a word in $\mathcal{R}_j$ or its inverse if $i \neq j$.
    Then the presentation $\langle \mathcal{A} \mid \mathcal{R} = \bigcup \mathcal{R}_i \rangle$ is called a \emph{graded} presentation, the members of $\mathcal{R}_i$ are called \emph{relators of rank} $i$.
\end{definition}

\begin{definition}
    If two words $X$ and $Y$ are equal in the group 
    \[ G(i) = \left\langle \mathcal{A} \mid R = 1, R \in \bigcup_{j = 1}^{i} \mathcal{R}_j \right\rangle, \]
    then we say $X$ and $Y$ are equal in rank $i$ and write $X \overset{i}{=} Y$.
\end{definition}

The previous definitions adapt to the graded setting.
\begin{definition}
    A map $\Delta$ is \emph{graded} if it is equipped with an assignment of rank to each of its cells. 
    The rank of a cell $\Pi$ is denoted $r(\Pi)$. 

    A diagram is graded if its underlying map is graded. 
    There is a natural grading for a diagram over a graded presentation:
    an $\mathcal{R}$-cell whose contour label is obtained from a relator $R \in \mathcal{R}_i$ as in the definition of an $\mathcal{R}$-cell gets rank $r(\Pi) = i$, which is well defined by the definition of a graded presentation, and 0-cells get rank $0$.
\end{definition}

\begin{definition}
    For $j \geq 1$ a pair of cells $\Pi_1, \Pi_2$ of rank $j$ is said to be cancellable in a diagram $\Delta$, or a \emph{$j$-pair}, if there is a 0-refinement $\Delta'$ containing copies $\Pi_1', \Pi_2'$ of $\Pi_1, \Pi_2$ and 
    a path $t$ without self-intersections joining a vertex $O_1$ of $\Pi_1'$ to a vertex $O_2$ of $\Pi_2'$ such that $\phi(t) \overset{j-1}{=} 1$ and $\phi(p_1) = \phi(p_2)^{-1}$ in $F$, where $p_1$ and $p_2$ are the contours of $\Pi_1'$ and $\Pi_2'$ starting at $O_1$ and $O_2$ (both traversed clockwise).
\end{definition}

\begin{definition}
    A graded diagram is said to be reduced if it does not contain a $j$-pair for any $j \geq 1$.
\end{definition}

\begin{definition}
    The \emph{rank} of a diagram is the largest rank of its cells.
    Its \emph{type} $\tau = (\tau_1, \tau_2, \ldots )$ is a sequence where $\tau_i$ is the number of cells of rank $i$ in the diagram.    
\end{definition}
Types are ordered lexicographically, starting from the highest rank. Cancelling a $j$-pair lowers the type, so every graded diagram can be brought to reduced form.

\begin{definition}
    A graded presentation of $G$ is called \emph{diagrammatically aspherical} or simply \emph{aspherical} if every graded spherical diagram over it is either non-reduced or consists entirely of 0-cells.
\end{definition}

\subsection{Presentations in free products}
The presentation used to construct $S$ is over a free product of groups rather than a free group; the definitions above carry over with small changes \cite[Chapter~11]{Olshanskii}.

Let $F = *_{i\in I} G_i$ be a free product; the $G_i$ are its \emph{factors}.
The group $F$ has a normal form: any element can be uniquely expressed as a product $w_1 \ldots w_k$, in which $w_{i} \in G_{\mu(i)}, w_{i} \neq 1$ and $\mu(i) \neq \mu(i+1)$. 
It is natural to consider elements of $F$ as words in the alphabet $\mathcal{A}$, consisting of all nontrivial elements of the factors $G_i$.

We will assume that in the presentation 
\[ G = \langle F \mid \mathcal{R} \rangle \]
no relator is conjugate into a factor $G_i$, and no relator is conjugate to another relator or to the inverse of another relator. 
Diagrams are defined as before, with two changes \cite[\S\S 33.2--33.3]{Olshanskii}. 
First, the inverse of a letter of $\mathcal{A}$ is again a letter of $\mathcal{A}$. 
Second, there are more 0-cells: a cell is a \emph{0-cell} if all letters of its contour label other than $1$ lie in a single factor $G_i$ and their product is $1$ in $G_i$. 
(The 0-cells of Definition~\ref{def:0-cells} are the case of the letters $a, a^{-1}$.) 
For example, in $\langle t\rangle \cong C_p$ the word consisting of $p$ copies of $t$ labels a 0-cell. 
In particular, a 0-refinement may subdivide an edge labelled $g \in G_i$ into two edges labelled $g_1, g_2 \in G_i$ with $g_1 g_2 = g$. 
With these 0-cells the van Kampen Lemma holds over $F$.

\begin{lemma}\label{lemma:counting-relators}
  Let $G = \langle F \mid \mathcal{R} \rangle$ be a graded aspherical presentation over a free product $F$ of groups without involutions, with kernel $N$, satisfying the conventions above.
  Regard the relation module $N_{\mathrm{ab}} = N/[N,N]$ as a right $\mathbb{Z}G$-module via $(u[N,N])\cdot\bar f = u^f[N,N]$.
  Let $R_0 \in \mathcal{R}$ be a relator that is not a proper power in $F$.
  Then $R_0[N,N]$ generates a free $\mathbb{Z}G$-submodule of rank one of $N_{\mathrm{ab}}$, and this submodule is a direct summand of $N_{\mathrm{ab}}$.
  More precisely, there is a unique homomorphism $\lambda : N \to \mathbb{Z}G$ such that
  \begin{enumerate}\itemsep=0mm
    \item $\lambda(u^f) = \lambda(u)\,\bar f$ for all $u \in N$, $f \in F$;
    \item $\lambda(R_0) = 1$;
    \item $\lambda(r) = 0$ for every $r \in \mathcal{R} \setminus \{R_0\}$;
  \end{enumerate}
  it induces a $\mathbb{Z}G$-linear map $\bar\lambda\colon N_{\mathrm{ab}} \to \mathbb{Z}G$ which splits $x \mapsto R_0[N,N]\cdot x$.
  In particular, if no relator in $\mathcal{R}$ is a proper power in $F$, then $N_{\mathrm{ab}}$ is a free $\mathbb{Z}G$-module with basis $\{R[N,N] : R \in \mathcal{R}\}$.
\end{lemma}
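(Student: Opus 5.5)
The plan is to build $\lambda$ from the geometry of reduced spherical diagrams, in the spirit of Ol'shanskii's treatment of the relation module over a free group \cite[\S\S 31--32]{Olshanskii}, adapted to the free product $F$ as in \cite[\S 34]{Olshanskii}. Every $u\in N$ is a product $u=\prod_{s}(R_s^{\pm1})^{f_s}$ in $F$ with $R_s\in\mathcal{R}$. The first step is to define
\[
\lambda(u)=\sum_{s:\,R_s=R_0}\pm\,\bar f_s\in\mathbb{Z}G .
\]
The relator $R_0$ is not a proper power, so its centralizer in $F$ is $\langle R_0\rangle$. Hence conjugates $R_0^{f}$ and $R_0^{f'}$ coincide exactly when $f'\in\langle R_0\rangle f$. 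Since $\bar R_0=1$ in $G$, the coefficient $\bar f_s$ depends only on the conjugate $R_0^{f_s}$ and not on the chosen $f_s$. Once well-definedness is established, properties (1)--(3) are immediate. Additivity holds by concatenating expressions, and $\lambda$ kills $[N,N]$: the formula gives $\lambda(u^v)=\lambda(u)\bar v=\lambda(u)$ for $v\in N$, since $\bar v=1$, so $\lambda([u,v])=0$. Thus $\lambda$ descends to a $\mathbb{Z}G$-linear map $\bar\lambda$. Uniqueness is clear, because $N$ is generated by the conjugates of relators.

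The main step, and the main obstacle, is well-definedness. It suffices to show that if $\prod_s (R_s^{\epsilon_s})^{f_s}=1$ in $F$, then the $R_0$-terms cancel in $\mathbb{Z}G$. I would convert such an identity into a spherical diagram. Start with the usual ``bouquet'' circular diagram, whose contour label is freely trivial over $F$ (now with the enlarged family of 0-cells), and close it into a sphere by gluing in a disc of 0-cells. Each cell $\Pi$ labelled by $R_0^{\pm1}$ then carries, via a path from the base point to a vertex of $\Pi$, a coefficient $\pm\bar f\in G$. The quantity to control is the signed sum of these coefficients. A $j$-pair cancellation with $j\geq 1$ removes two cells whose coefficients are negatives of one another. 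Indeed, the connecting path $t$ satisfies $\phi(t)\overset{j-1}{=}1$, so it is trivial in $G$, and the contours are mutually inverse; for $R_0$-cells this uses once more that $R_0$ is not a proper power, so the matching of base points is determined up to a power of $R_0$. The remaining operations (0-refinements and removal of 0-cells) leave the coefficients unchanged. Reducing the diagram therefore preserves the signed sum. By asphericity the reduced diagram has no $\mathcal{R}$-cells, so the sum is $0$. Two cautions apply here. One must check that the contour convention over a free product, where letters may be subdivided by 0-refinement, does not spoil the base-point bookkeeping. One must also use the standing assumption that no relator is conjugate to another relator or its inverse, so that only genuine $R_0$-cells contribute. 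The no-involutions hypothesis enters only through Ol'shanskii's framework for free products, where it is needed for the diagram calculus.

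Finally, the splitting. The map $x\mapsto R_0[N,N]\cdot x$ from $\mathbb{Z}G$ to $N_{\mathrm{ab}}$ is $\mathbb{Z}G$-linear, and its composite with $\bar\lambda$ sends $1\mapsto\lambda(R_0)=1$, so the composite is the identity. Hence this map is injective, its image is free of rank one, and $N_{\mathrm{ab}}=R_0\mathbb{Z}G\oplus\ker\bar\lambda$. For the last assertion, take all $R\in\mathcal{R}$, each not a proper power, and form the maps $\lambda_R$. These give a map $N_{\mathrm{ab}}\to\bigoplus_R\mathbb{Z}G$, which lands in the direct sum since each $u$ involves only finitely many relators. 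This map is left inverse to $\bigoplus_R\mathbb{Z}G\to N_{\mathrm{ab}}$, which is surjective because the classes $R[N,N]$ generate $N_{\mathrm{ab}}$. Hence both maps are isomorphisms, and $N_{\mathrm{ab}}$ is free with basis $\{R[N,N]\}$.
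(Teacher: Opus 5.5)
Your strategy matches the paper's: weight each $R_0$-cell by a sign times its position in $G$, show that spherical diagrams have total weight $0$ by cancelling pairs, then split off $R_0\mathbb{Z}G$. Defining $\lambda$ through expressions rather than disc diagrams is the same thing via the bouquet diagram. But the key step, that reducing the spherical diagram preserves the signed sum, is not justified as you state it.

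In a \emph{graded} diagram, a $j$-pair only needs a connecting path $t$ with $\phi(t)\overset{j-1}{=}1$, not $\phi(t)=1$ in $F$. So cancelling it does not just delete $\Pi_1,\Pi_2$ and some 0-cells. You cut out a disc with contour label $P\,U\,P^{-1}U^{-1}$, where $U=\phi(t)$, and must paste back a disc diagram of rank $\le j-1$ for this word. That disc contains $\mathcal{R}$-cells of lower rank, including $R_0$-cells whenever the rank of $R_0$ is less than $j$. For an arbitrary filling you have no control over their total weight, so your claim that the remaining operations are only 0-refinements and removal of 0-cells is false.

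This is exactly where the paper does extra work. It takes the new disc to be two mirror copies of a rank-$(j-1)$ diagram for $U=1$, joined by a strip labelled $P$. Since $\bar P=1$ in $G$, corresponding cells in the two copies have the same vertex labels and opposite orientations, so their weights cancel. The total weight is therefore unchanged while the type decreases. The induction is on type, not on the number of $\mathcal{R}$-cells, since lower-rank cells can multiply.

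There is a second, smaller gap. Once new cells appear during cancellation, and cells are matched only through equality in $F$, the sign of an $R_0$-cell must be read off intrinsically from its label. That requires $R_0$ not to be conjugate to $R_0^{-1}$ in $F$. The cyclicity of $C_F(R_0)$ alone only controls the conjugator, not the sign.

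This is where the no-involutions hypothesis is actually used, not merely "for the diagram calculus." If $g^{-1}R_0g=R_0^{-1}$, then $g^2\in C_F(R_0)=\langle R_0\rangle$, say $g^2=R_0^k$. Conjugating by $g$ gives $R_0^k=R_0^{-k}$, so $k=0$ and $g^2=1$. Since $F$ has no involutions, $g=1$, which would give $R_0=R_0^{-1}$, impossible.

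With these two points supplied, your formula for $\lambda$ agrees with the paper's total-weight definition, and your splitting and freeness arguments go through.
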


\begin{proof}
  We follow the counting argument of \cite[Lemma~32.2]{Olshanskii}, but assign each cell its position in $G$ instead of grouping cells into equivalence classes.

  \emph{Vertex labels.}
  In a disc or spherical diagram with a fixed base vertex, label each vertex $v$ by $b(v) = \overline{\phi(g)} \in G$, where $g$ is any path from the base to $v$.
  This does not depend on $g$: the label of a closed path is trivial in $G$, because every cell label is.

  \emph{Weights.}
  A cell gets weight $0$ unless its contour label $W$, read from some vertex $v$, satisfies $W = c R_0^{\varepsilon} c^{-1}$ in $F$ with $\varepsilon = \pm 1$.
  In that case its weight is
  \[ \omega = \varepsilon\, \bar c^{-1}\, b(v)^{-1} \in \mathbb{Z}G . \]
  The sign is unique, because $R_0$ is not conjugate to $R_0^{-1}$ in $F$.
  Indeed, suppose $g^{-1}R_0g = R_0^{-1}$ for some $g \in F$; then $g^2$ centralizes $R_0$.
  The centralizer of $R_0$ is $\langle R_0 \rangle$ \cite[Theorem~33.4]{Olshanskii}, since $R_0$ has infinite order, is not conjugate into a factor and is not a proper power.
  Hence $g^2 = R_0^k$ for some $k$, and conjugating by $g$ gives $R_0^k = g^{-1}g^2g = (g^{-1}R_0g)^k = R_0^{-k}$.
  Since $R_0$ has infinite order, $k = 0$, so $g^2 = 1$.
  Then $g = 1$, because $F$ has no involutions (every element of finite order in $F$ is conjugate into a factor), and $R_0 = R_0^{-1}$, which is impossible.

  The weight does not depend on the choices.
  Another conjugator differs from $c$ by an element of $C_F(R_0) = \langle R_0 \rangle \subseteq N$, so $\bar c$ is unchanged.
  Moving the starting vertex along a boundary segment $s$ replaces $c$ by $s^{-1}c$ and $b(v)$ by $b(v)\bar s$, so $\omega$ is unchanged.
  $0$-refinements do not change any weight.

  \emph{Collars.} If two words $W_1, W_2$ are equal in $F$, the van Kampen Lemma for the presentation of $F$ without relators gives a disc of 0-cells with contour label $W_1\,1\,W_2^{-1}\,1$. Identifying its two edges labelled $1$, as in the proof of \cite[Lemma~11.2]{Olshanskii}, gives an annulus of 0-cells, a \emph{collar}: its outer contour has label $W_1$, its inner boundary read clockwise has label $W_2$, and a 0-edge joins their starting vertices. Unlike a 0-refinement, attaching a collar changes the contour label, but only to a word equal in $F$. It adds only cells of weight $0$, and if the base vertex is joined to the new one by 0-edges, it changes no vertex label.

  \emph{Spherical diagrams have total weight $0$.}
  We argue by induction on the type of the diagram, as in \cite[\S 13.2]{Olshanskii}.
  If there are no cells of positive rank, the total weight is $0$.
  Otherwise asphericity provides a $j$-pair.
  Its connecting path is trivial in rank $j-1$, hence in $G$, so the starting vertices of the two cells have the same label.
  Read from $O_1$ and $O_2$, their contour labels are mutually inverse in $F$, so if one is $cR_0^{\varepsilon}c^{-1}$ the other is $cR_0^{-\varepsilon}c^{-1}$, and their weights are opposite (or both $0$).
  
  Choose the base vertex outside the part of the diagram that is changed. After attaching collars, we may assume that the two based contour labels are visually $P$ and $P^{-1}$. Cancelling the pair replaces a disc with contour label $P\,U\,P^{-1}U^{-1}$, where $U = \phi(t) \overset{j-1}{=} 1$, by a disc of rank $<j$. The removed pair has total weight $0$.  
  Take for the new disc two mirror copies of a rank-$(j-1)$ diagram for $U = 1$, joined by a strip labelled $P$ \cite[Fig.~87]{Olshanskii}.
  Since $\overline{P} = 1$, corresponding cells of the two copies have the same vertex labels and opposite orientations, so their weights cancel.
  The total weight is therefore unchanged, while the type decreases.

  \emph{Definition of $\lambda$.}
  For $u \in N$, let $\lambda(u)$ be the total weight of a disc diagram whose contour label represents $u$ (based at the start of the contour).
  Given two such discs, attach a collar to one of them so that the two contour labels become visually equal with the same starting vertex; gluing them along the contour gives a sphere of total weight $\lambda_1 - \lambda_2 = 0$, so $\lambda$ is well defined.
  Placing two discs side by side shows $\lambda(u_1u_2) = \lambda(u_1) + \lambda(u_2)$, since $\bar u_1 = 1$.
  A whisker labelled $f^{-1}$ turns a disc for $u$ into a disc for $u^f$ and left-multiplies every vertex label by $\bar f^{-1}$; this gives (1).
  One-cell diagrams give (2) and (3); for (3), recall that no relator other than $R_0$ is conjugate to $R_0^{\pm1}$.
  Uniqueness holds because $N$ is generated by the conjugates of relators.

  \emph{The relation module.}
  Since the additive group of $\mathbb{Z}G$ is abelian, $\lambda$ vanishes on $[N,N]$, and by (1) the induced map $\bar\lambda\colon N_{\mathrm{ab}} \to \mathbb{Z}G$ is $\mathbb{Z}G$-linear.
  By (2), $\bar\lambda(R_0[N,N]\cdot x) = \bar\lambda(R_0[N,N])\,x = x$ for all $x \in \mathbb{Z}G$.
  Hence $x \mapsto R_0[N,N]\cdot x$ is injective, and its image is a free direct summand of rank one, with complement $\ker\bar\lambda$.
  For the last assertion, the elements $R[N,N]$, $R \in \mathcal{R}$, generate $N_{\mathrm{ab}}$, again because $N$ is generated by the conjugates of relators.
  If $\sum_{R} R[N,N]\cdot x_R = 0$, then applying $\bar\lambda$ for the relator $R_0$ and using (2) and (3) gives $x_{R_0} = 0$; this holds for every $R_0 \in \mathcal{R}$.
\end{proof}

\begin{remark}
  For graded aspherical presentations over a free group, the last assertion of Lemma~\ref{lemma:counting-relators} is \cite[Corollary~32.1]{Olshanskii}; see also \cite[Theorem~5 and Corollary~3]{AshmanovOlshanskii}.
  Lemma~\ref{lemma:counting-relators} is its analogue over free products, with the hypothesis imposed only on the relator $R_0$; this is all that Section~\ref{section:the-central-extension} needs.
  When no relator is a proper power, the statement that every spherical diagram has total weight $0$ for every choice of $R_0$ is the identity property of \cite[Definition~3.1]{Hitchman} for the pair formed by presentation complexes of $G$ and of $F$, and the freeness of $N_{\mathrm{ab}}$ then also follows from \cite[Proposition~3.4]{Hitchman}.
\end{remark}

\subsection{Obraztsov embedding theorems}
The last ingredients are Obraztsov's embedding Theorems A, B and D.

\begin{definition}
    The \emph{free amalgam} of a family of groups $\{ X_i \}_{i\in I}$ is the (set) union $\bigcup X_i$.
    We take this union to be ``disjoint'' in the sense that $X_i \cap X_j = \{ 1 \}$ whenever $i \neq j$.
    We will call a map out of an amalgam into a group an \emph{embedding} if it is injective and a homomorphism restricted to each $X_i$.
\end{definition}

From now on, $\{G_i\}_{i\in I}$ is a family of non-trivial groups without involutions.
Put $G(1) := *_{i\in I} G_i$ and let $\Omega^1$ be the free amalgam of the $G_i$, $\Omega := \Omega^1 \setminus \{ 1 \}$. 
We omit the definition of a generating map $f$; it appears in the hypothesis of Theorem A,
and our construction uses the explicit one given in the proof of Theorem D, which satisfies the condition of Theorem A. 

We state only the parts of Obraztsov's Theorems A, B and D that we use, adapted to our setting; for the full statements see \cite{Obraztsov}. 
Theorem~D is specialized to groups $H$ without involutions, and some conclusions are taken from Obraztsov's proofs rather
than from his statements.
\begin{obrA}[{\cite[Theorem~A, items (1)--(3)]{Obraztsov}}]\label{thm:A}
    Let $m$ be a sufficiently large odd number or $m = \infty$, and let $g_i\colon G_i \to H$ be homomorphisms with kernels $N_i$ whose images generate $H$. 
    Suppose that the set $I_1$ of indices $j$ with $N_j \ne 1$ has more than one element, let $\Omega^1_1$ be the free amalgam of $\{N_j\}_{j\in I_1}$, 
    and let $f\colon 2^\Omega \setminus \{\emptyset\} \to 2^\Omega$ be a generating map such that $f(C) \cap \Omega_1 \neq \emptyset$ whenever $C$ is not contained in a single factor $G_i$, where $\Omega_1 := \Omega^1_1 \setminus \{1\}$.
    
    Then $\Omega^1$ embeds into a group $G := \langle G(1) \mid \mathcal{R}\rangle $, given by a graded aspherical presentation and generated by $\Omega$, with the following properties:
    \begin{enumerate}
        \item $\Omega_1^1$ embeds in a normal simple infinite subgroup $L$ of $G$ such that $G / L \cong H$;
        \item if $X \in L$ is not conjugate to any element of one of the groups $G_i$, 
        then either $X$ is equal to a power of an element of infinite order whose image in $H$ has even order, 
        or $X$ is of order dividing $m$ (of infinite order if $m = \infty$);
        \item the action by conjugation of $G$ on $L$ identifies $\Aut L \cong G$ (so $\Out L = H$).
    \end{enumerate}
\end{obrA}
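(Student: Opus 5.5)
The statement is quoted from \cite{Obraztsov}, so in the paper it is simply cited. If I had to reconstruct it, I would follow Ol'shanskii's method of graded presentations over the free product $G(1)=*_{i\in I}G_i$ \cite[Chapter~11]{Olshanskii}. The plan is to define the relators $\mathcal{R}=\bigcup_i\mathcal{R}_i$ by induction on rank and then read off (1)--(3) from the geometry of reduced diagrams.

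\emph{Construction of the relators.} Well-order the words (periods) that are not conjugate into a factor and are minimal in rank $i-1$. For each period $A$ of rank $i$ there are two kinds of relator. If $m<\infty$, add $A^m$. In all cases add relators of the ``second type'',
\[
c^{-1}A^{n}cA^{n+k}\cdots cA^{n+30(h-2)+k},
\]
where the elements $c$ are produced by the generating map $f$ applied to the set of letters. These relators serve three purposes. They force every pair of elements not lying in a common cyclic or factor subgroup to generate the whole target subgroup, which gives simplicity and the cyclicity of proper subgroups. They kill every outer automorphism not coming from $G$. They make the kernel of $G(1)\to H$, assembled from $\Omega_1$, normally generated by any nontrivial element. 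The homomorphisms $g_i$ enter only in choosing the $c$'s inside the preimage of $1\in H$, so that all relators lie in the kernel of $G(1)\to H$. Then $L$, the image of that kernel, satisfies $G/L\cong H$.

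\emph{Geometric core.} With the parameters $\alpha,\beta,\gamma,\delta,n_0$ chosen in Ol'shanskii's small-cancellation hierarchy, I would verify condition R of \cite[\S\S 25, 34]{Olshanskii} for every rank. Condition R yields the key lemmas inductively: asphericity of the graded presentation, the Greendlinger-type lemma for reduced disc diagrams, and the structure theorems for annular diagrams. These in turn give the following.
\begin{itemize}
\item The factors embed, so $\Omega^1$ embeds.
\item Every element not conjugate into a factor is conjugate to a power of a period. This gives (2): the order divides $m$, or the element has infinite order. The ``even order'' clause arises only when the image of the period in $H$ has order $2$, so that an involution can invert it.
\item Every element has a cyclic centralizer.
\end{itemize}
Item (1) then follows. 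Simplicity comes from the second-type relators, since any nontrivial $x\in L$ together with a suitable $c$ normally generates $L$. The condition $|I_1|>1$ and the requirement on $f(C)\cap\Omega_1$ guarantee that $L$ is infinite and contains $\Omega_1^1$.

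\emph{Automorphisms.} For (3), the plan is the following.
\begin{itemize}
\item Show first that $C_G(L)=1$, using cyclic centralizers and non-cyclicity of $L$, so that $G\to\Aut L$ is injective.
\item For an automorphism $\varphi$ of $L$, map each period-type element to an element with the same properties. Use the relators, which are rigid words in $A$ and $c$, to force $\varphi$ on a generating set to agree with conjugation by some $g\in G$. This is the Ol'shanskii-style argument that an automorphism of such a ``monster'' must preserve the maximal cyclic subgroups determined by the periods, and hence be inner up to $G$.
\end{itemize}

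\emph{Main obstacle.} The hardest step is verifying condition R for the new second-type relators at every rank, uniformly over the free product. Small-cancellation estimates between pieces of the form $cA^{s}$ must hold both against earlier relators and against the 0-cells coming from the factors. I would try first to reduce to Ol'shanskii's free-group estimates (\cite[\S\S 25--27, 34]{Olshanskii}). The spacing $30$ in the exponents is what makes pieces short relative to the relator length, which is precisely where Obraztsov's \cite[Lemma~3.1]{Obraztsov1994} on periods is used.
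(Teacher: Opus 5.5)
The paper gives no proof of this statement. It is quoted, in partial and adapted form, from \cite{Obraztsov}, so your opening sentence is the paper's entire treatment. Read as a proof, your reconstruction points to the right machinery (graded aspherical presentations over $G(1)$, condition R), but it breaks at definite points.

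First, for finite $m$ you add $A^m$ for every period $A$ while also requiring all relators to lie in $\ker(G(1)\to H)$. These are incompatible, since $A^m$ lies in that kernel only when the image of $A$ in $H$ has order dividing $m$. So the $g_i$ do not enter ``only in choosing the $c$'s'': the periodic relators must take the image of $A$ in $H$ into account. The exception in (2), which concerns images of even order rather than of order $2$, presumably reflects this.

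Second, the bullet ``every element has a cyclic centralizer'' is false in the generality of Theorem A. The factors are arbitrary involution-free groups and embed in $G$, so if $G_i\cong\mathbb{Z}^2$, every nontrivial element of $G_i$ has a non-cyclic centralizer. The cyclic-centralizer property concerns elements not conjugate into a factor. Your derivation of $C_G(L)=1$ therefore covers only those, and elements conjugate into a factor need a separate argument.

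The main gap is item (3). Relators do not ``kill'' automorphisms. What must be shown is that every automorphism $\varphi$ of the group $L$ produced by the construction is conjugation by an element of $G$, and such a $\varphi$ knows nothing about $\Omega$, the presentation, or which elements are periods. Sending maximal cyclic subgroups to maximal cyclic subgroups, or elements to elements of the same order and centralizer type, is automatic for every automorphism and does not imply this. In the case the paper actually uses ($G_1=\langle x\rangle$, $G_2=\langle y\rangle$ infinite cyclic, $L=S$), factor elements and powers of periods both have infinite order and cyclic centralizers. So nothing in your sketch controls where $\varphi$ sends $\langle x\rangle$. The missing ingredient is a genuine rigidity mechanism: either an $\Aut L$-invariant description of the conjugates of the factor subgroups, or a uniqueness statement for equalities of second-type shape strong enough to be transported by $\varphi$. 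This is the substance of (3), and your plan does not supply it. (Also, according to the paper, \cite[Lemma~3.1]{Obraztsov1994} is what lets $a[a^{-1},c^{-1}]^{\ell}$ be taken as a period; it is not a piece-length estimate.)
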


\begin{obrB}[{\cite[Theorem~B, items (1)--(2)]{Obraztsov}}]\label{thm:B}
    Let $H$ and $G_i$ ($i \in I$, $|I| > 1$) be groups without involutions, the $G_i$ non-trivial; 
    let $\Omega^1$ be the free amalgam of $H$ and the $G_i$, $f$ any generating map on $\Omega$, and $m$ as in Theorem A. 
    Let $G$ and $L$ be the groups given by Theorem A for the factors $G_i$ and $H$, with $g_H = \id$ and the $g_i$ trivial.

    Then $\Omega^1$ embeds into $G$ and $\Out L \cong H$, and every $h\in H \setminus \{1 \}$ induces a regular automorphism on $L$.
\end{obrB}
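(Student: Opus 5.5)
Theorem~B is quoted from \cite{Obraztsov}, and we use only items (1)--(2). The plan is to deduce these items from Theorem~A together with one structural fact about centralizers in the group it produces.

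\emph{Deducing the first assertions from Theorem~A.} Apply Theorem~A to the family consisting of the $G_i$ and $H$, with the target group $H$, with $g_H=\id$ and with every $g_i$ trivial. The images generate $H$, because $g_H$ is onto. The kernels are $N_i=G_i\neq 1$ and $N_H=1$, so $I_1=I$ has more than one element. Since $\Omega_1$ contains every non-trivial element of every $G_i$, the condition on the generating map asks only that $f(C)$ meets $\bigcup_i G_i\setminus\{1\}$ whenever $C$ is not contained in a single factor. This is exactly the restriction imposed on $f$, and I would check it for the explicit map used in the proof of Theorem~D. Theorem~A then embeds $\Omega^1$ into $G$ and gives $G/L\cong H$ and $\Aut L\cong G$. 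Hence $\Out L\cong G/L\cong H$. Because $g_H=\id$, the composite $H\hookrightarrow G\to G/L$ is an isomorphism, so $H\cap L=1$.

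\emph{Reducing regularity to a centralizer statement.} Let $h\in H\setminus\{1\}$ and suppose $X\in L$ is fixed by $c_h$, so $X\in C_G(h)\cap L$. It suffices to prove
\[
C_G(h)\subseteq H \qquad\text{for every } h\in H\setminus\{1\},
\]
for then $X\in H\cap L=1$. The same then holds for every conjugate of $h$, so every element inducing the outer class of $h$ acts regularly. This is the form used in Proposition~\ref{prop:simple-monster}(3).

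\emph{The centralizer statement, which is the main obstacle.} I would prove it by Ol'shanskii's annular-diagram method for graded aspherical presentations over free products \cite[Chapter~11]{Olshanskii}. Suppose $X^{-1}hX=h$ with $X\notin H$. Take a reduced annular diagram $\Delta$ whose two contours are labelled by the single letter $h$, chosen of minimal type.
\begin{enumerate}
\item If $\Delta$ has no cells of positive rank, then the relation already holds in $G(1)$. In a free product a non-trivial element of a factor has its centralizer inside that factor, so $X\in H$, a contradiction.
\item Otherwise, a contiguity lemma gives an $\mathcal{R}$-cell of some rank $j$ with a long contiguity arc to one of the contours. Each contour has length $1$, so that arc would force a relator to contain a long periodic piece with period conjugate into $H$.
\item Relators of the second type are built from periods $A$ that are not conjugate into a factor. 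Condition~R, in particular the small overlap with powers of factor elements, then rules out such a piece. Hence $\Delta$ has no cells of positive rank, and we are back in case~1.
\end{enumerate}
The delicate point is step~2: matching the contiguity estimates, namely the degree parameters $\delta$ and $n_0$, against the form of the relators. I would first try to extract this directly from the lemmas in \cite[\S3]{Obraztsov} on annular diagrams with contours in a factor, since those are the lemmas underlying Theorem~A(2).
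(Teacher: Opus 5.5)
The paper gives no proof of this statement. Theorem~B, including the regularity assertion, is imported from \cite{Obraztsov}, and it is used only to obtain $C_S(t)=1$ in Proposition~\ref{prop:simple-monster}(3).

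\textbf{What works.} Your deduction of the embedding and of $\Out L\cong G/L\cong H$ from Theorem~A is fine, with one caveat. The step $H\cap L=1$ needs the isomorphism $G/L\cong H$ to be the one induced by the $g_i$. That holds in Obraztsov's construction, where $L$ is the kernel of that map, but it is not part of Theorem~A as quoted. The quoted form only gives a surjection $H\twoheadrightarrow G/L$, which suffices for Hopfian $H$ such as $C_p$. Your reduction of regularity to $C_G(h)\subseteq H$ is correct, and is in fact an equivalence because $G=L\rtimes H$.

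\textbf{A wrong aside.} The claim that every element inducing the outer class of $h$ acts regularly does not follow. If $h$ has order $n$ and $g\in hL$, then $c_g$ fixes $g^n\in L$, so the claim would force every element of $hL$ to have order $n$. Regularity is a property of $c_h$ and its conjugates, not of a class in $\Out L$. The paper needs it only for $c_t$.

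\textbf{The genuine gap.} You do not prove the centralizer statement, and it is the entire content of the regularity assertion. It is exactly what the paper outsources to \cite{Obraztsov}. Your steps 2--3 are a plan, and step~3 aims at the wrong obstruction. An arc of a contour of length one has length at most one, so no long periodic piece is read off the contour, and Condition~R's restrictions on periods play no role.

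What must be supplied is an annular contiguity theorem for graded aspherical presentations over free products, valid for Obraztsov's parameters. It should say that a reduced annular diagram of positive rank contains a cell $\Pi$ whose contiguity degrees to the two contours are large in total. You then need the estimate that a contiguity arc on $\partial\Pi$ cannot be much longer than the corresponding arc of the contour (here of length at most one) plus the side arcs. Equivalently, a large subword of a relator is not equal in lower rank to a word of bounded length. Since cells of positive rank have long contours, this forces rank~$0$, and your step~1 (malnormality of a factor in a free product) finishes the argument. Until that theorem is proved, or located in \cite{Obraztsov} or \cite[Chapter~11]{Olshanskii}, the argument stops at step~2.
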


\begin{obrD}[{\cite[Theorem~D, item 2]{Obraztsov}}]\label{thm:D}
    Let $\{G_i\}_{i\in I}$, with $|I| > 1$, be a countable family of non-trivial countable groups without involutions (countable meaning finite or countably infinite). 
    Let $H$ be a countable group without involutions, and $m$ as in Theorem A.
    
    Then there is a generating map $f_\Omega$ for which the groups $G$ and $L$ of Theorem B are such that
    every proper subgroup of $L$ is either cyclic (of infinite order if $m=\infty$ and of order dividing $m$ otherwise) or contained in $\psi(G_i)$ for some $\psi \in \Aut L$ and $i \in I$.  
\end{obrD}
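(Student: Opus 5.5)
Since this is a specialization of \cite[Theorem~D]{Obraztsov}, the plan is to reproduce Obraztsov's argument in our setting rather than find a new route. The idea is to choose the generating map $f_\Omega$ so aggressively that any two elements of $L$ which neither commute nor lie in a common conjugate of a factor already generate $L$. Everything is countable: $I$, each $G_i$, $H$, and hence $\Omega$ and the set of finite subsets of $\Omega$. So I would enumerate all ordered pairs $(A,C)$ of words that may arise as representatives of pairs of elements at some rank. For each such pair not contained in a single factor, $f_\Omega(\{A,C\})$ would be all of $\Omega$, or at least a generating set of $L$ meeting $\Omega_1$. Obraztsov's rank-by-rank construction then introduces, for each $x\in f_\Omega(\{A,C\})$, a relator of the second type,
\[ x^{-1}\,\mathcal{A}^{n}c\,\mathcal{A}^{n+k}c\cdots c\,\mathcal{A}^{n+30(h-2)+k} \quad\text{with } \mathcal{A} = A[A^{-1},C^{-1}]^{\ell}, \]
modelled on the words $r_k$ of Proposition~\ref{prop:simple-monster}. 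This forces $x\in\langle A,C\rangle$. The condition $f(C)\cap\Omega_1\neq\emptyset$ needed in Theorem~A holds automatically.

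The key steps, in order, are as follows.
\begin{enumerate}
\item Fix the enumeration and build the graded presentation by Obraztsov's induction on rank. Verify at each rank that the new relators satisfy condition R of \cite[\S\S 25, 34]{Olshanskii}: small cancellation over the free product, and non-proper powers. Graded asphericity and the conclusions of Theorems~A and~B then carry over unchanged.
\item Record the structure of elements: each element of $G$ is conjugate into a factor $G_i$ or to a power of a period. Centralizers of periods are cyclic, of infinite order when $m=\infty$ and of order dividing $m$ otherwise.
\item Let $M\le L$ be a proper subgroup. If all pairs of elements of $M$ lie in a common cyclic subgroup, then $M$ is locally cyclic, and the centralizer description upgrades this to cyclic.
\item Otherwise, pick a non-commuting pair $X,Y\in M$. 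If $X$ and $Y$ are not simultaneously conjugate into one factor, then after conjugating to minimal representatives they realize some enumerated pair $(A,C)$. The relators from the first step then give $\langle X,Y\rangle = L$, contradicting properness.
\item Finally, if every such pair is conjugate into one factor, a Kurosh-type argument in the free-product diagram setting shows that the conjugator can be chosen uniformly. Hence $M\subseteq g G_i g^{-1}$ with $g\in G$, and by Theorem~A(3) conjugation by $g$ is some $\psi\in\Aut L$.
\end{enumerate}

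The main obstacle is the fourth step. One must show that an arbitrary pair in $L$ can be brought, by conjugation and by Nielsen-type moves inside $\langle X,Y\rangle$, to a pair of words that the enumeration actually handled at some finite rank. One must also check that the forcing relators of the first step stay compatible with the small-cancellation hypotheses. I would try to control both with Ol'shanskii's contiguity estimates, following \cite[Lemma~3.1]{Obraztsov1994}. The specialization to $H$ without involutions only simplifies matters, because it removes the even-order alternative in Theorem~A(2).
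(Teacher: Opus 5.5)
The paper gives no proof of this statement. It is quoted from \cite[Theorem~D]{Obraztsov}, with the explicit generating map taken from Obraztsov's proof, so your outline has to stand on its own, and as a proof it has a genuine gap.

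\textbf{The core step is not proved.} The one substantive claim is that two elements of $L$ lying neither in a common cyclic subgroup nor in a common conjugate of a factor generate $L$. That is essentially the whole content of Theorem~D, and you leave it as ``the main obstacle''. Three things are missing:
\begin{enumerate}
\item a check that your forcing words are admissible second-type relators at some rank, i.e.\ that $A[A^{-1},C^{-1}]^{\ell}$ is a period, simple in the previous rank (this is what \cite[Lemma~3.1]{Obraztsov1994} provides in Obraztsov's setting);
\item a check that the enlarged presentation still satisfies condition~R;
\item a check that every relevant pair meets the hypotheses at the rank where it is processed.
\end{enumerate}

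\textbf{The forcing scheme as set up is not admissible.} A generating map is a function on non-empty subsets of $\Omega$, not on pairs of words. So you are outside the setting of Theorems~A and~B and cannot simply carry their conclusions over. More seriously, with $f_\Omega=\Omega$ you impose $x^{-1}\mathcal{A}^{n}c\cdots$ for every $x\in\Omega$, although $\mathcal{A}$ and $c$ represent elements of $L$. For $1\neq x\in H$ this word does not lie in $N=\ker(G(1)\to H)$; it forces $x\in L$ and destroys $G/L\cong H$. Only letters of $\Omega\cap L=\Omega_1$ can be forced. These generate only the image of $*_iG_i$, not a priori $L$, which is generated by the $H$-conjugates of the $G_i$. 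So an extra step is needed. For instance, if the set of processed pairs is invariant under conjugation by $H$, then $\Omega_1\subseteq\langle X^h,Y^h\rangle$ gives $h\Omega_1h^{-1}\subseteq\langle X,Y\rangle$ for all $h\in H$, hence $\langle X,Y\rangle=L$.

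\textbf{The case analysis in steps 3--5 fails for general factors.} The statement allows any countable $G_i$ without involutions.
\begin{enumerate}
\item Take $G_1\cong\mathbb{Q}$. Then $M=G_1$ is a proper subgroup of $L$ in which every pair lies in a cyclic subgroup, yet $M$ is not cyclic. So step~3 fails: the cyclicity of centralizers you invoke concerns only elements not conjugate into a factor.
\item Take $G_1\cong\mathbb{Z}^2$. Then $M=G_1$ is handled by neither step~3 nor step~4, since it has no non-commuting pair to start from.
\end{enumerate}
Split instead according to whether $M$ contains a non-trivial element $X$ of some $gG_ig^{-1}$.
\begin{enumerate}
\item If it does, then for every $Y\in M$ the pair $X,Y$ lies in a cyclic subgroup or in a common conjugate of a factor. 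The malnormality-type facts of the kind your step~5 already relies on then give $Y\in gG_ig^{-1}$. These facts are: two conjugates of factors sharing a non-trivial element coincide, and $C_G(X)\subseteq gG_ig^{-1}$.
\item If it does not, every pair lies in a cyclic subgroup, so $M$ is abelian and contained in the cyclic centralizer of any of its non-trivial elements, hence cyclic.
\end{enumerate}
In the paper's application the factors are infinite cyclic, so this point is harmless there. Finally, Theorem~A(3) is not needed to produce $\psi$: conjugation by $g\in G$ already restricts to an automorphism of the normal subgroup $L$.
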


\begin{lemma}[\cite{Olshanskii}, Lemma 34.7 (1), (3)]\label{lemma:conjugate-into}
    Let $G$ be the group of Theorem A with $m = \infty$. 
    Then every element of $G$ of finite order is conjugate to an element of one of the factors $G_i$.
\end{lemma}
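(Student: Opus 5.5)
The plan is to deduce the statement from Olshanskii's structure theory of the group $G$ given by a graded aspherical presentation over the free product $G(1)=*_{i\in I}G_i$ with $m=\infty$. Let $g\in G$ have finite order $n>1$ and suppose, toward a contradiction, that $g$ is not conjugate into any factor $G_i$. Write $g$ as a word $X$ in $\Omega$. Among all conjugates of $g$, take $X$ to be cyclically reduced in the free product $G(1)$ and of minimal length. If $X$ has free-product length at most one, then $g$ already lies in a factor, so we may assume that $X$ involves at least two factors and is not conjugate into a factor even in rank $\infty$.

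The key input, used before Lemma 34.7 in \cite{Olshanskii} (\S\S 33--34, the free-product analogue of the periodicity lemmas of Chapter 6/\S 18), is the following: every such element is conjugate in $G$ to a power of a \emph{period} $A$ of some rank $i$. The relators of rank $i$ are then built from $A$. For $m=\infty$, Obraztsov's relators of the second type are long products like $c^{-1}A^n cA^{n+k}\cdots$, and they never impose $A^m=1$. Hence no relator of the form $A^{m}$ is added, and periods have infinite order in $G$. This infinite-order assertion is exactly what item (3) of Lemma 34.7 provides in the case where no relators of the first type are present, so I would cite it directly. Consequently a power $A^s$ with $s\ne 0$ has infinite order. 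This contradicts the assumption that $g$ has finite order, unless $g$ is conjugate into a factor.

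To make the reduction to a period explicit, I would run the standard argument. Consider a reduced annular diagram witnessing the conjugation, or equivalently a disc diagram for $X^n=1$. By asphericity (contiguity and Greendlinger-type lemmas over free products, Lemma 34.7(1)), $X$ is conjugate in rank $i-1$ either to an element of a factor or to a power of a period of rank $\le i$. The main obstacle is to verify that the hypotheses of Olshanskii's \S 34 (condition R, and factors without involutions) hold for Obraztsov's presentation. That verification is asserted in \cite[\S 3]{Obraztsov}, and I would simply invoke it. With those hypotheses in place, the lemma follows as stated: every element of finite order is conjugate into some $G_i$.
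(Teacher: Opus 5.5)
Your proposal is correct and takes the same approach as the paper: the paper also just invokes Ol'shanskii's Lemma 34.7 (1) and (3) for $G(\infty)$ (an element is conjugate into a factor or to a power of a period, and for $m=\infty$, where there are no relators of the first type, periods have infinite order), and relies on Obraztsov (p.~285, with condition R verified in his \S 3) to apply it to the presentation of Theorem A. Your extra diagrammatic sketch is unnecessary and loosely phrased (an annular conjugacy diagram is not ``equivalent'' to a disc for $X^n=1$), but the argument does not depend on it.
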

In Ol'shanskii \cite{Olshanskii} this lemma is stated for the group $G(\infty)$; Obraztsov applies it to the presentation of Theorem A \cite[p. 285]{Obraztsov}. 

\section*{Use of artificial intelligence} 
The initial version of the example presented in this article was suggested by an AI assistant. We also used AI assistance to refine the construction, check mathematical arguments, and draft and revise the exposition. All arguments and references were independently checked by the authors, who take full responsibility for the mathematical content and references.

\bibliographystyle{alpha}
\bibliography{references}
\end{document}